\DeclareMathAlphabet{\curly}{U}{rsfs}{m}{n}  
\newtheorem{theorem}{Theorem}[section]
\newtheorem{lemma}[theorem]{Lemma}
\newtheorem{proposition}[theorem]{Proposition}
\newtheorem{corollary}[theorem]{Corollary}
\theoremstyle{definition}
\newtheorem{question}{Question}
\renewcommand{\leq}{\leqslant}
\renewcommand{\geq}{\geqslant}
\newcommand\Pois{\operatorname{Pois}}
\def\Z{\mathbb{Z}}
\def\E{\mathbb{E}}
\def\P{\mathbb{P}}
\def\N{\mathbb{N}}
\def\b{\mathbf{b}}
\def\X{\mathbf{X}}
\def\Y{\mathbf{Y}}
\def\c{\mathbf{c}}
\def\eps{\varepsilon}
\renewcommand{\a}{\alpha}
\renewcommand{\b}{\beta}
\renewcommand{\d}{\mathbf{d}}
\newcommand{\ba}{\mathbf{a}}
\newcommand{\bb}{\mathbf{b}}
\newcommand{\DD}{\curly D}
\renewcommand{\pmod}[1]{\allowbreak\mkern7mu({\operator@font mod}\,\,#1)}
\newcommand{\be}{\begin{equation}}
\newcommand{\ee}{\end{equation}}
\newcommand{\ssum}[1]{\sum_{\substack{#1}}}  
\renewcommand{\(}{\left(}
\renewcommand{\)}{\right)}
\newcommand{\pfrac}[2]{\left(\frac{#1}{#2}\right)}  
\newcommand{\order}{\asymp}      
\newcommand{\fl}[1]{{\ensuremath{\left\lfloor {#1} \right\rfloor}}}
\renewcommand{\le}{\leqslant}
\renewcommand{\ge}{\geqslant}
\newcommand{\cS}{\mathcal{S}}  
\newcommand{\cA}{\mathcal{A}}
\newcommand{\cC}{\mathcal{C}}  
\newcommand{\sL}{\mathscr{L}}
\newcommand{\deq}{\overset{d}=}  
\numberwithin{equation}{section}
\begin{document}

\title{Permutations fixing a $k$-set}

\author{Sean Eberhard}
\address{Mathematical Institute\\
Radcliffe Observatory Quarter\\
Woodstock Road\\
Oxford OX2 6GG\\
England }
\email{sean.eberhard@maths.ox.ac.uk}

\author{Kevin Ford}
\address{Department of Mathematics, 1409 West Green Street, University
of Illinois at Urbana-Champaign, Urbana, IL 61801, USA}
\email{ford@math.uiuc.edu}

\author{Ben Green}
\address{Mathematical Institute\\
Radcliffe Observatory Quarter\\
Woodstock Road\\
Oxford OX2 6GG\\
England }
\email{ben.green@maths.ox.ac.uk}

\thanks{BG is supported by ERC Starting Grant number 279438, \emph{Approximate algebraic structure and applications}, and a Simons Investigator Award.}
\thanks{KF is supported by National Science Foundation grants DMS-1201442 and DMS-1501982.}


\begin{abstract}
Let $i(n,k)$ be the proportion of permutations $\pi\in\cS_n$ having an invariant set of size $k$. In this note we adapt arguments of the second author to prove that $i(n,k) \asymp k^{-\delta} (1+\log k)^{-3/2}$ uniformly for $1\leq k\leq n/2$, where $\delta = 1 - \frac{1 + \log \log 2}{\log 2}$. As an application we show that the proportion of $\pi\in\cS_n$ contained in a transitive subgroup not containing $\cA_n$ is at least $n^{-\delta+o(1)}$ if $n$ is even.
\end{abstract}

\maketitle

\section{Introduction and notation}

Let $k,n$ be integers with $1\le k\le n/2$ and select a permutation 
$\pi \in \cS_n$, that is to say a permutation of $\{1,\dots, n\}$, at random. What is $i(n,k)$, the probability that $\pi$ fixes some set of size $k$? Equivalently, what is the probability that the cycle decomposition of $\pi$ contains disjoint cycles with lengths summing to $k$?

Somewhat surprisingly, $i(n,k)$ has only recently been at all well understood in the published literature. The lower bound $\lim_{n\to\infty} i(n,k) \gg \log k/k$ is contained in a paper of Diaconis, Fulman and Guralnick \cite{dfg08}, while the upper bound $i(n,k) \ll k^{-1/100}$ may be found in work of {\L}uczak and Pyber \cite{lp93}. (These authors did not make any special effort to optimise the constant 1/100, but their method does not lead to a sharp bound.) Here and throughout $X\ll Y$ means $X\leq CY$ for some constant $C>0$. The notation $X\order Y$ will be used to mean $X\ll Y$ and $X\gg Y$. In the limit as $n \rightarrow \infty$ with $k$ fixed, a much better bound was very recently obtained by Pemantle, Peres, and Rivin \cite[Theorem 1.7]{PPR}. They prove that $\lim_{n \rightarrow \infty} i(n,k) = k^{-\delta + o(1)}$, where $$\delta = 1 - \frac{1 + \log \log 2}{\log 2}\approx 0.08607.$$ They also note a connection between the problem of estimating $i(n,k)$ and a certain number-theoretic problem, an analogy that will be also be key to our work. The same connection has also been observed by Diaconis and Soundararajan~\cite[page 14]{sound-ams}.

Let us explain the connection with number theory. There is a well known analogy (see, for example, \cite{ABT}) between the cycle decomposition of a 
random permutation and the prime factorisation of a random integer.  Specifically, if $\pi$ is a random 
permutation with cycles of lengths $a_1 \le a_2 \le \dots$, and if $n$ is a random integer with prime 
factors $p_1 < p_2 < \dots$ then one expects both sequences $\log a_1, \log a_2, \dots$ and $\log \log p_1 , 
\log \log p_2 , \dots$ to behave roughly like Poisson processes with intensity $1$. 
(Of course, this does not make sense if taken too literally, since the $a_i$ are all integers, and the $p_i$ 
are all primes, plus we have not specified exactly what we mean by either a ``random permutation'' or a 
``random integer''.) The condition that $a_{i_1} + \dots + a_{i_m} = k$ (that is, that a particular set of cycle 
lengths sum to $k$) is, because the $a_i$ are all integers, equivalent to
$k \leq a_{i_1} + \dots + a_{i_m} < k+1$. Pursuing the analogy between cycles and primes, we may equate 
this with the condition $k \leq \log p_{i_1} + \dots + \log p_{i_m} \leq k+1$, or in other words 
$e^k \leq p_{i_1} \cdots p_{i_m} \leq e^{k+1}$. This then suggests that we might compare $i(n,k)$ 
with $\tilde i(n,k)$, the probability that a random very large integer (selected uniformly from $[e^n, 
e^{n+1})$, say) has a divisor in the range $[e^k, e^{k+1})$.

This last problem has a long history, originating as a problem of Besicovitch~\cite{Bes} in 1934, and was solved (up to a constant factor) by the second author~\cite{ford,ford-2}. In those papers it was shown that $\tilde i(n,k) \asymp k^{-\delta} (1 + \log k)^{-3/2}$ uniformly for $k\le n/2$, where $\delta$ is the constant mentioned above. In this paper we use the same method to prove the same rate of decay for $i(n,k)$.

\begin{theorem}\label{mainthm}
$i(n,k) \asymp k^{-\delta} (1+\log k)^{-3/2}$ uniformly for $1\le k\le n/2$.
\end{theorem}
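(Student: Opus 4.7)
The plan is to transplant the second author's proof of the analogous divisor estimate~\cite{ford,ford-2} to the symmetric-group setting, exploiting the probabilistic dictionary~\cite{ABT} in which the cycle lengths $a_1 \le a_2 \le \dots$ of a uniform $\pi \in \cS_n$ are analogous to $\log p_1 \le \log p_2 \le \dots$ for the prime divisors of a random integer. Under this dictionary, ``$\pi$ fixes a $k$-set'' corresponds exactly to ``a random integer in $[e^n, e^{n+1})$ has a divisor in $[e^k, e^{k+1})$'', so one expects the same exponents $\delta$ and $3/2$ to emerge by essentially the same proof structure.

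The first step would be to pass to a clean independent model. Writing $C_j(\pi)$ for the number of cycles of length $j$, for $1 \le j \le n/2$ the variables $C_j$ are jointly within $o(1)$ total variation of independent Poissons with $\E C_j = 1/j$~\cite{ABT}. Since any cycles contributing to an invariant $k$-set have length at most $k \le n/2$, $i(n,k)$ becomes, up to a constant factor, the probability that the random multiset containing $j$ with multiplicity $\Pois(1/j)$ admits a sub-multiset summing to exactly $k$.

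The heart of the argument is then to partition the range of cycle lengths into geometric scales $[e^s, e^{s+1})$ and, for each candidate invariant $k$-set, to record the vector $(k_s)$ of partial sums within each scale. The upper bound would follow Ford's entropy analysis: conditional on $(k_s)$, the probability of realising the trajectory factors across scales by independence, while the outer sum over admissible trajectories becomes a random walk computation in which the entropy of excursions hitting an obstruction produces the exponent $\delta = 1 - (1+\log\log 2)/\log 2$, and the terminal constraint $\sum_s k_s = k$ (rather than $\sum_s k_s$ lying in a short interval) contributes the local-limit factor $(1+\log k)^{-3/2}$ via a Gaussian calculation. The matching lower bound would be obtained dually, by exhibiting for each near-extremal trajectory an explicit family of cycle configurations that realise it and whose Poisson weights sum to the claimed amount.

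I expect the principal obstacle to be not a single conceptual step but the labour of reverifying each of Ford's sieve- and entropy-theoretic inputs with primes replaced by cycle lengths; the harmonic sums are in fact gentler ($\sum 1/j$ in place of $\sum_{p} 1/p$, with exact rather than Mertens-approximate values), but the local existence of subset sums within each scale and, most delicately, the precise Gaussian correction underlying the $(1+\log k)^{-3/2}$ factor will each need fresh (if essentially routine) verification in the permutation model, since the Gaussian correction is a sharp local-limit statement and not a robust entropy quantity like $\delta$.
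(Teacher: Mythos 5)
Your high-level plan (transplant Ford's argument via the cycles--primes dictionary, decompose by geometric scale, and extract $\delta$ and the $(\log k)^{-3/2}$ factor) is the correct shape, and the scale-by-scale / cycle-lemma analysis you gesture at is indeed what the paper does once it is in the Poisson model. But there is a genuine gap at your very first reduction. You propose to pass to the independent $\Pois(1/j)$ model via the Arratia--Tavar\'e total-variation estimate for $(C_1(\pi),\dots,C_k(\pi))$. That estimate decays only as $n/k\to\infty$; in the range $k\le n/2$ covered by the theorem, $n/k$ can be as small as $2$, and the total-variation distance is then merely bounded by a constant. Since the target quantity $i(n,k)\to 0$ as $k\to\infty$, a TV error of constant size tells you nothing about the order of $i(n,k)$. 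The paper explicitly flags this (``This fact is, however, not strong enough for our application'') and substitutes a different tool: a \emph{permutation sieve} (Proposition~\ref{sieve2}), which gives pointwise two-sided multiplicative bounds
\[
\frac{1}{(2m+2)\prod_{i\le m} c_i!\,i^{c_i}} \;\le\; \P\big(c_1(\pi)=c_1,\dots,c_m(\pi)=c_m\big) \;\le\; \frac{1}{(m+1)\prod_{i\le m} c_i!\,i^{c_i}}
\]
whenever $c_1+2c_2+\cdots+mc_m\le n-m-1$, proved by a Buchstab-type induction (Lemma~\ref{sieve}). These atom-level bounds are what allow the Poisson model to be used uniformly down to $k=n/2$; your TV step cannot be repaired and must be replaced by something of this kind.

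Two smaller points. First, the paper does not directly estimate $\P(k\in\sL(\X_k))$; it first proves a local-to-global reduction $i(n,k)\asymp\frac1k\,\E|\sL(\X_k)|$ (Proposition~\ref{first-reduction}) and only then estimates the expectation, which is a materially easier object. Your proposal, as written, targets the probability itself, which would require you to also reconstruct this reduction. Second, your attribution of the $(1+\log k)^{-3/2}$ factor entirely to a Gaussian local-limit for the terminal constraint is not quite right: a local CLT for the Poisson-distributed cycle count accounts only for a factor $(\log k)^{-1/2}$; the remaining $(\log k)^{-1}$ comes from an order-statistics/ballot phenomenon in the spacing of the cycle lengths (handled in the paper via the cycle lemma in the lower bound and via the quantities $U_r$ of Ford in the upper bound). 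This is worth flagging because it is exactly the sort of ``essentially routine'' verification that turns out not to be routine unless you have the right intermediate quantity in view.
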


Since $i(n,n-k)=i(n,k)$, Theorem \ref{mainthm} establishes
the order of $i(n,k)$ for all $n,k$.

Theorem~\ref{mainthm} has implications for a conjecture of Cameron related to random generation of the symmetric group. Cameron conjectured that the proportion of $\pi\in\cS_n$ contained in a transitive subgroup not containing $\cA_n$ tends to zero: this was proved by {\L}uczak and Pyber~\cite{lp93} using their bound $i(n,k)\ll k^{-1/100}$. Cameron further guessed that this proportion might decay as fast as $n^{-1/2+o(1)}$ (see~\cite[Section 5]{lp93}). However Theorem~\ref{mainthm} has the following corollary.

\begin{corollary}\label{trans}
The proportion of $\pi\in\cS_n$ contained in a transitive subgroup not containing $\cA_n$ is $\gg n^{-\delta}(\log n)^{-3/2}$, provided that $n$ is even and greater than $2$.
\end{corollary}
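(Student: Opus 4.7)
The plan is to exhibit, for each permutation $\pi$ that fixes a set of size $n/2$, an explicit transitive proper subgroup of $\cS_n$ containing $\pi$. Write $m = n/2 \ge 2$. If $\pi$ fixes a subset $A \subset \{1,\dots,n\}$ with $|A|=m$, then $\pi$ also preserves the complement $A^c$, and hence preserves the unordered partition $\{A, A^c\}$. The setwise stabiliser of this partition is a conjugate of the wreath product $H = \cS_m \wr \cS_2$ of order $2(m!)^2$. So the goal is to argue that this $H$ is transitive but does not contain $\cA_n$, and then to invoke Theorem~\ref{mainthm} with $k = n/2$.

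For transitivity: $H$ contains $\cS_A \times \cS_{A^c}$, which acts transitively on each of $A$ and $A^c$, together with an element swapping the two blocks (since $|A| = |A^c|$); hence $H$ is transitive on $\{1,\dots,n\}$. For properness: $|H| = 2(m!)^2$ while $|\cA_n| = \tfrac{1}{2}\binom{n}{m}(m!)^2$, and the inequality $\binom{n}{m} > 4$ holds as soon as $n \geq 4$, i.e.\ $n > 2$ even. Thus $|H| < |\cA_n|$, so $H$ cannot contain $\cA_n$.

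It follows that every $\pi \in \cS_n$ having an invariant $m$-set lies in some transitive subgroup of $\cS_n$ not containing $\cA_n$. The proportion of such $\pi$ is exactly $i(n, n/2)$, and by Theorem~\ref{mainthm},
\[ i(n, n/2) \;\asymp\; (n/2)^{-\delta}\bigl(1 + \log(n/2)\bigr)^{-3/2} \;\gg\; n^{-\delta}(\log n)^{-3/2}, \]
which is the desired lower bound. Since every step is essentially immediate once one notices that an invariant $m$-set produces an imprimitive block system with two blocks, there is no substantive obstacle; the content of the corollary is entirely concentrated in Theorem~\ref{mainthm}.
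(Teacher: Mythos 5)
Your proof is correct and is essentially the same as the paper's: fix a set of size $n/2$, observe that the complement is also fixed so the partition into two equal blocks is preserved, and note that the stabiliser of that partition is a transitive proper subgroup; then apply Theorem~\ref{mainthm} with $k=n/2$. You simply spell out the cardinality check $|\cS_{n/2}\wr\cS_2| < |\cA_n|$ that the paper leaves implicit.
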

\begin{proof}
By Theorem~\ref{mainthm} the proportion of $\pi\in\cS_n$ fixing a set $B_1$ of size $n/2$ is $\order n^{-\delta}(\log n)^{-3/2}$. Such a permutation $\pi$ must also fix the set $B_2=\{1,\dots,n\}\setminus B_1$, and thus preserve the partition $\{B_1,B_2\}$ of $\{1,\dots,n\}$. Since $|B_1|=|B_2|$, the set of all $\tau$ preserving this partition is a transitive subgroup not containing $\cA_n$.
\end{proof}

We believe that a matching upper bound $O(n^{-\delta}(\log n)^{-3/2})$ holds in Corollary~\ref{trans}, and that for odd $n$ there is an upper bound of the form $O(n^{-\delta'})$ for some $\delta'>\delta$. We intend to return to this problem in a subsequent paper.

Whether or not a permutation $\pi$ has a fixed set of size $k$ depends only on the vector  $\c=(c_1(\pi),c_2(\pi),\ldots,c_k(\pi))$ listing the number of cycles of length $1,2,\ldots,k$, respectively, in $\pi$.
Crucial to our argument is the well known fact (see, e.g., \cite{ABT}) that for \emph{fixed} $k$,  $\c$ has limiting distribution (as $n\to\infty$) equal to $\X_k=(X_1,X_2,\dots,X_k)$, where the $X_i$ are independent and $X_i$ has Poisson distribution with parameter $1/i$ (for short, $X_i \deq \Pois(1/i)$). A simple corollary is that the limit $i(\infty,k) = \lim_{n \rightarrow \infty} i(n,k)$ exists for every $k$. Define, for any finite list $\c=(c_1,c_2,\ldots,c_k)$ of non-negative integers, the quantity
\begin{equation}\label{Lc}
\sL(\c) = \{ m_1 + 2m_2 + \dots +km_k:  0 \leq m_j \leq c_j \; \; 
\mbox{for $j =1,2, \dots, k$}\big\}.
\end{equation}
We immediately obtain that
\begin{equation}\label{fund-inf} i(\infty,k) = \P(k \in \sL(\X_k)).
\end{equation}
This makes it easy to compute $i(\infty,k)$ for small values of $k$. For example we have the extremely well known result (derangements) that
\[ i(\infty,1) = \P(X_1 \geq 1) = 1 - \frac{1}{e} \approx 0.6321,\]
and the less well known fact that
\[ i(\infty,2)  = 1 - \P(X_1 = X_2 = 0) - \P(X_1 = 1, X_2 = 0) = 1- 2e^{-3/2} \approx 0.5537.\]

When $k$ is allowed to grow with $n$, the vector $\c$ is still close to being distributed as  $\X_k$, the total variation distance between the two distributions decaying rapidly as $n/k\to\infty$ \cite{AT92}. This fact is, however, not strong enough for our application. We must establish an approximate analog of \eqref{fund-inf}, showing that $i(n,k)$ has about the same order as  $\P(k \in \sL(\X_k))$, uniformly in $k\le n/2$.

Instead of directly estimating the probability of a single number lying in $\sL(\X_k)$, however, we apply a local-to-global principle
used in \cite{ford,ford-2} to reduce the problem to studying the \emph{size} of $\sL(\X_k)$. We expect a positive proportion of the elements of $\sL(\X_k)$ to lie in the range $[\frac{1}{10}k, 10k]$ (say). The reason for this is that we expect to find $\sim 1$ index $j$ for which $X_j > 0$ in any interval 
$[e^i, e^{i+1}]$. In particular, it is fairly likely that there is some such $j$ with $j > k/10$, 
in which case at least half of the sums $m_1 + 2m_2 + \dots + km_k$ will be $\geq k/10$ (those with $m_j > 0$),
yet at the same time it is reasonably likely that \emph{all} elements of $\mathscr{L}(\X_k)$ are $< 10k$. 
Assuming this heuristic is reasonable, we might expect that 
\begin{equation}\label{heur} i(n,k) \order \P(k \in \sL(\X_k)) \asymp \frac{1}{k}\E|\mathscr{L}(\X_k)|.
\end{equation}
In Section \ref{first-red}, we will show that \eqref{heur} does indeed hold.
The main result of that section is the following.

\begin{proposition}\label{first-reduction}$i(n,k) \order \frac{1}{k} \E |\sL(\X_k)|$ uniformly for $1 \le k\le n/2$.
\end{proposition}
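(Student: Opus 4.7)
The proof has two stages: a dePoissonization and then a local-to-global reduction, parallel to the approach of \cite{ford,ford-2} in the divisor setting.

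The dePoissonization would establish $i(n,k)\asymp\P(k\in\sL(\X_k))$ uniformly for $1\le k\le n/2$. The input is the exact formula
\[
  \P\bigl((c_1,\ldots,c_k)(\pi)=\ba\bigr)=\biggl(\prod_{j=1}^k\frac{(1/j)^{a_j}}{a_j!}\biggr)\,q_{n-\sum ja_j,\,k},
\]
where $q_{\ell,k}$ is the probability that a random element of $\cS_\ell$ has no cycle of length $\le k$. The classical estimate $q_{\ell,k}\asymp\prod_{j\le k}e^{-1/j}$ (valid for $\ell\ge k$) shows that the density of $(c_1,\ldots,c_k)(\pi)$ and that of $\X_k$ are pointwise comparable with absolute constants, provided $\sum ja_j\le n-k$. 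Summing the comparison over all $\ba$ with $k\in\sL(\ba)$ yields the required $\asymp$, with a tail error from $\sum ja_j>n-k$ controlled by a Chernoff bound on $\sum jX_j$ (which has mean $k\le n/2$); this tail is $\le e^{-ck}$, negligible compared to the trivial lower bound $\P(k\in\sL(\X_k))\ge\P(X_k\ge 1)\asymp 1/k$.

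The local-to-global step is to show $\P(k\in\sL(\X_k))\asymp\E|\sL(\X_k)|/k$. Guided by the heuristic spelled out in the introduction, the plan is to prove two intermediate facts: (i) $\E|\sL(\X_k)|\asymp\E|\sL(\X_k)\cap W|$ for a window $W=[k/C,Ck]$ with $C$ an absolute constant, and (ii) a slow-variation estimate $\P(m\in\sL(\X_k))\asymp\P(k\in\sL(\X_k))$ uniformly for $m\in W$. Combining (i) and (ii) gives $\E|\sL(\X_k)|\asymp\sum_{m\in W}\P(m\in\sL(\X_k))\asymp k\cdot\P(k\in\sL(\X_k))$, as required. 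For (i), the inclusion $\sL(\X_k)\subset[0,S]$ with $S=\sum jX_j$ enjoying exponential concentration around its mean $k$ controls the top end of the window, and the bottom end is controlled by combining the trivial bound $\E|\sL(\X_k)\cap[0,k/C]|\le k/C$ with the \L{}uczak--Pyber upper bound $i(\infty,m)\ll m^{-1/100}$, which gives $\sum_{m<k/C}\P(m\in\sL(\X_k))\ll(k/C)^{1-1/100}$.

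Claim (ii) is the principal technical obstacle and is where the analogy with the divisor problem is essential. The plan is a shifting argument exploiting the independence of the Poisson coordinates: from a decomposition $m=\sum jm_j$ with $m_j\le X_j$, one modifies the contribution of a few indices $j$ in a strategically chosen window so as to push the sum to $k$. The delicate part is ensuring that arbitrary residues $k-m$ are achievable by such a shift and that the probabilistic cost, measured by the ratio of Poisson weights between the old and new decompositions, is bounded by an absolute constant. These are exactly the issues handled in \cite{ford,ford-2} in the divisor setting (where $\log p$ replaces $j$), and the argument should transfer with only cosmetic modifications.
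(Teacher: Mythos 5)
Your overall plan — dePoissonize, then a local-to-global step — reflects the heuristic \eqref{heur} stated in the introduction, but the paper never actually proves the intermediate comparison $i(n,k)\asymp\P(k\in\sL(\X_k))$; it goes directly to $\frac1k\E|\sL(\X_k)|$ by a single combinatorial argument, and the route you propose has a genuine gap at exactly the boundary case $k=n/2$ that matters most for Corollary~\ref{trans}. Conditioning on the \emph{full} vector $(c_1,\ldots,c_k)(\pi)$ (as in your displayed formula) requires, via Proposition~\ref{sieve2} with $m=k$, that $\sum_j ja_j\le n-k-1$. But if $k\in\sL(\ba)$ then automatically $\sum_j ja_j\ge k$, so at $k=n/2$ the set of $\ba$ you can handle is \emph{empty}, and for $k$ slightly below $n/2$ the excluded tail $\P\bigl(k\in\sL(\X_k),\sum_j jX_j>n-k\bigr)$ is not $e^{-ck}$ as you claim: since $\sum_j jX_j$ has mean $k$ and standard deviation $\asymp k$ (indeed $k^{-1}\sum jX_j$ converges to a Dickman-type law with polynomial, not exponential, tails), this probability is a positive constant, not a negligible error. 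The paper evades this by conditioning only on cycles of length $\lesssim\frac45 k$ (it decomposes $\pi=\alpha\sigma_1\sigma_2\beta$ with $\alpha$ supported on lengths $\le r=\lfloor k/20\rfloor$, $\sigma_1,\sigma_2$ single cycles with lengths in $(4r,16r)$, and $\beta$ large), leaving enough room for the sieve of Proposition~\ref{sieve2} to apply all the way up to $k=n/2$, and by removing the residual restriction $S(\X_r)\le4r$ via the soft inequality $1_{S\le 4r}\ge 1-S/(4r)$ and a first-moment bound (Lemma~\ref{lem-sum-gen}).

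Your step (ii), the "slow variation" estimate $\P(m\in\sL(\X_k))\asymp\P(k\in\sL(\X_k))$ uniformly over $m\asymp k$, is in fact the entire local-to-global content of the proposition, and you have only a sketch of a shifting heuristic, not a proof; it is not obviously true that the Poisson weight ratio in your proposed shift is bounded by an absolute constant (the cost of moving mass from one index $j$ to another can be as large as $j'/j$). The paper does something structurally different: rather than proving slow variation of a single one-dimensional probability, it inserts two free cycles $\sigma_1,\sigma_2$ in a dyadic window so that, for each fixed $\ell_1$, the condition $k-\ell_1-\ell_2\in\sL(\c)$ has \emph{exactly} $|\sL(\c)|$ solutions in $\ell_2$, which converts the point count to a set size directly without any smoothness assertion about $\P(m\in\sL(\X_k))$. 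The upper bound, correspondingly, is not obtained by truncating $\sL(\X_k)$ to $[k/C,Ck]$ and summing (your step (i)), but by selecting a distinguished ``middle'' cycle $\sigma$, getting a factor $1/\ell^2$ from two applications of Lemma~\ref{Cauchy} and Lemma~\ref{sieve}, and observing that the number of admissible $\ell=|\sigma|$ is at most $|\sL(\c)|$ while $\ell\ge\max(m(\c),k_j-S(\c))$, after which the tail is controlled by the mixed-moment estimates of Lemma~\ref{lem-sum-gen}. In short: the dePoissonization and the local-to-global reduction are intertwined in the paper's proof, not performed sequentially, and this is what allows the argument to reach $k=n/2$.
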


Our main theorem follows immediately from this and the next proposition, whose proof occupies Sections 
\ref{lower} (lower bound) and \ref{upper} (upper bound). Note that in these propositions 
we operate with the sequence $\X_k = (X_1,X_2,\dots,X_k)$ of genuinely independent random variables, which
is independent of $n$.

\begin{proposition}\label{average-result}
$\E |\mathscr{L}(\X_k)| \asymp k^{1-\delta}(1+\log k)^{-3/2}$. 
\end{proposition}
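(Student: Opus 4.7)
The plan is to transport Ford's methods \cite{ford,ford-2}---originally developed for the problem of counting integers with a divisor in a short range---to the present ``Poissonized'' setting. Under the dictionary of the introduction, $X_j$ plays the role of the multiplicity of a prime of logarithm $\asymp j$, and $\sL(\X_k)$ corresponds to the set of logarithms of divisors. I will break the bound into an upper and lower half, each adapting Ford's arguments.

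For the \textbf{upper bound}, I write $\E|\sL(\X_k)|=\sum_{s\geq 0}\P(s\in\sL(\X_k))$ and split at $s=Ck$ for a sufficiently large constant $C$. The tail $\sum_{s>Ck}\P(s\in\sL(\X_k)) \leq \E(\sum_j jX_j-Ck)_+$ is negligible by Bennett's inequality, since $\E\sum_j jX_j=k$ with subgaussian fluctuations of order $k$. For the main sum I would establish, uniformly for $s\asymp k$, the pointwise estimate $\P(s\in\sL(\X_k)) \ll k^{-\delta}(1+\log k)^{-3/2}$ by partitioning $\{1,\dots,k\}$ into dyadic scales $J_i=[e^i,e^{i+1})$, noting that the number of active indices $j\in J_i$ (those with $X_j>0$) is bounded in distribution (mean $\sum_{j\in J_i}1/j\asymp 1$), and controlling the number of representations of $s$ by a hierarchical conditioning argument across scales, just as in \cite{ford,ford-2}.

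For the \textbf{lower bound} I would use Cauchy--Schwarz. Writing $R_\c(s)=\#\{(m_j):\sum_j jm_j=s,\,0\leq m_j\leq c_j\}$, one has $|\sL(\c)|\sum_s R_\c(s)^2 \geq (\sum_s R_\c(s))^2 = \prod_j(c_j+1)^2$ by Cauchy--Schwarz in the variable $s$, and applying Cauchy--Schwarz again to the randomness in $\X_k$ yields
\[
\E|\sL(\X_k)| \geq \frac{(\E\prod_{j=1}^k(X_j+1))^2}{\E\sum_s R_{\X_k}(s)^2} = \frac{(k+1)^2}{\E\sum_s R_{\X_k}(s)^2},
\]
using $\prod_{j=1}^k(1+1/j)=k+1$ and the independence of the $X_j$. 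It therefore suffices to show $\E\sum_s R_{\X_k}(s)^2 \ll k^{1+\delta}(1+\log k)^{3/2}$. This second moment counts pairs of representations of a common subsum; substituting $d_j=m_j-m_j'$ reduces it to estimating $\E\sum_{\sum_j jd_j=0}\prod_j(X_j+1-|d_j|)_+$, which I would bound by the same anatomy-of-subsums argument used for the upper bound, restricted to the good event that the active scales are well-separated so that few non-trivial vectors $(d_j)$ can balance to satisfy $\sum_j jd_j=0$.

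The \textbf{main obstacle} is reproducing, in the Poisson setting, the delicate dyadic/variational analysis of \cite{ford,ford-2} that produces the exact exponent $\delta=1-(1+\log\log 2)/\log 2$ together with the secondary $(1+\log k)^{-3/2}$ factor. In Ford's integer argument significant effort is spent uniformizing over the random logarithms of primes (via Mertens, Selberg, and sieve bounds), whereas here those logarithms $\log j$ are deterministic, which simplifies matters; but we must still carefully track the discrete combinatorics with the multiplicity weights $(X_j+1-|d_j|)_+$, and in particular carry through the Gaussian local-limit estimate responsible for the $(1+\log k)^{-3/2}$ factor with constants uniform in $k$.
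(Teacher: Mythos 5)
Your lower bound via global Cauchy--Schwarz is not salvageable, and this is the critical gap. You write $\E|\sL(\X_k)| \geq (k+1)^2/\E\sum_s R_{\X_k}(s)^2$ and then assert $\E\sum_s R^2 \ll k^{1+\delta}(1+\log k)^{3/2}$, but this second moment is in fact of order $k^2$, not $k^{1+\delta+o(1)}$. The reason is precisely the phenomenon that makes this problem subtle: $\sum_s R = \prod_j(X_j+1)$ has mean $k+1$, but this expectation is dominated by rare events in which $Y := \sum_j X_j$ is much larger than its typical value $\approx \log k$. On the event $Y = r$, one has $\sum_s R \geq 2^r$ while $\sum_s R^2 \geq (\sum_s R)^2/|\sL| \gg 4^r/k$ whenever $\sum_j jX_j \ll k$, and taking $r \approx 4\log k$ (probability $\approx (\log k)^r e^{-\log k}/r!$) already contributes $\gg k^2$ to $\E\sum_s R^2$. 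Consequently the bound you obtain is $\E|\sL(\X_k)| \gg (k+1)^2/k^2 \asymp 1$, which is vacuous. The point the paper exploits, and which your proposal misses, is that Cauchy--Schwarz must be applied \emph{conditionally}, after fixing the tuple $\bb=(b_1,\dots,b_J)$ recording how many of the cycle lengths fall into each dyadic interval $[2^{i-1},2^i)$; this decouples the two tails. One then still has to average the resulting bound $\gg (2\log 2)^J\big/\sum_i 2^{b_1+\cdots+b_i-i}$ over $\bb$, and evaluating that sum is where the cycle lemma enters (and is what produces the $k^{-\delta}$ rather than $k^{-\text{something else}}$). Neither the conditioning nor the cycle-lemma averaging appears anywhere in your sketch.

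Your upper bound, as stated, also has issues, though they are less fatal in principle. You propose to write $\E|\sL(\X_k)|=\sum_{s\geq 0}\P(s\in\sL(\X_k))$ and to prove a pointwise bound $\P(s\in\sL(\X_k))\ll k^{-\delta}(1+\log k)^{-3/2}$ for $s\asymp k$, but (a) you do not address $s\ll k$, for which this bound is false (e.g.\ $\P(1\in\sL(\X_k)) = 1-1/e$), so you would in fact need the $s$-dependent bound $\ll s^{-\delta}(1+\log s)^{-3/2}$; (b) the claim that the tail $s>Ck$ is handled by Bennett with ``subgaussian fluctuations of order $k$'' is wrong --- $\sum_j jX_j$ has variance $\asymp k^2$ and heavy, not subgaussian, upper tails (one needs that $\P(\sum jX_j>Ck)$ decays like $k^{-\Theta(C)}$ from the Poisson tails of the large-$j$ summands); and most importantly (c) proving a pointwise bound on $\P(s\in\sL(\X_k))$ is precisely the harder local statement that the paper's Proposition \ref{first-reduction} is designed to avoid, and it amounts to redoing the full Ford analysis of \cite{ford,ford-2} rather than the much lighter argument the paper uses. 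The paper instead bounds $|\sL^*(\ba)| \le G(\ba) = \min_j 2^{r-j}(\tilde a_1+\cdots+\tilde a_j+1)$, converts the resulting sum to an integral (Lemma \ref{lemma4.1}), and quotes \cite[Lemma 3.6]{ford-2} for that integral $U_r(\log_2 k)$, summing over $r$ near $\log_2 k$ by Stirling. Your sketch does not state the key structural bound on $|\sL^*|$ and does not invoke any analogue of $U_r$, so it is not a proof of the proposition in the sense the paper needs it; it is a sketch of a stronger statement whose proof would be substantially longer.
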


To briefly explain the origin of the exponent $\delta$, we first observe the simple inequalities
\begin{equation}\label{sLup1}
 |\sL(\X_k)| \le \min \( 2^{X_1+\cdots + X_k}, 1+X_1+2X_2+\cdots+kX_k \).
\end{equation}
Assume this is  close to being sharp with reasonably high probability, and condition on $Y=X_1+\cdots+X_k$, the number of
cycles of length at most $k$ in a random permutation.  Following our earlier heuristic, the second term 
on the right side of \eqref{sLup1} is $\asymp k$ most of the time, and so there is a change of behaviour 
around $Y=\frac{\log k}{\log 2}+O(1)$.  Since $Y$ is Poisson with
parameter $\log k+ O(1)$, a short calculation reveals that
$\E \min (2^Y,k) \order k^{1-\delta} (\log k)^{-1/2}$.  We err in the logarithmic term due to the fact that
\eqref{sLup1} is only sharp with probability about $1/\log k$, a fact that is related to order statistics
\cite[Sec.~4]{ford-2}.


Let us finally mention two open questions.

\begin{question}
Is there some constant $C$ such that $i(\infty,k) \sim C k^{-\delta} (\log k)^{-3/2}$?
\end{question}
It would be surprising if this were not the case. 

\begin{question}
Is $i(\infty,k)$ monotonically decreasing in $k$?
\end{question}

Data collected by Britnell and Wildon~\cite{britnell-wildon} shows that this is so at least as far as $i(\infty,30)$, and of course a positive answer is plausible just from the fact that $i(\infty,k) \to 0$. 

%
%
\section{A permutation sieve}\label{sec-perm-sieve}
%
%

As mentioned in the introduction, the asymptotic distribution (as $n \rightarrow \infty$ with $k$ fixed) of the cycle lengths $(c_1(\pi), \dots, c_k(\pi))$ of a random $\pi \in \cS_n$ is that of $\X_k = (X_1,\dots, X_k)$, where the $X_i$ are independent with $X_i \deq \Pois(1/i)$. In the nonasymptotic regime, where $n$ may be as small as $2k$, this property is lost. We do, however, have the following substitute which will suffice for this paper. 

\begin{proposition}\label{sieve2}
 Let $1\le m<n$ and $c_1,\ldots,c_m$ be non-negative integers satisfying $$c_1+2c_2+\cdots +mc_m\le n-m-1.$$ Suppose that $\pi \in \cS_n$ is chosen uniformly at random. Then
 \[ \frac{1}{(2m+2) \prod_{i=1}^m c_i! i^{c_i}} \leq \P(c_1(\pi) = c_1,\dots, c_m(\pi) = c_m) \leq \frac{1}{(m+1) \prod_{i=1}^m c_i! i^{c_i}}.\]
\end{proposition}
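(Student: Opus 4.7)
The starting point is a natural decomposition of the event $\{c_i(\pi)=c_i : i=1,\ldots,m\}$. Let $k := c_1+2c_2+\cdots+mc_m$ and $N := n-k$. A permutation with the prescribed short-cycle counts is obtained by selecting the $k$ elements lying in cycles of length $\le m$, arranging them into the prescribed short-cycle structure in $k!/\prod_i c_i!\, i^{c_i}$ ways (by the classical cycle-counting formula), and then choosing any permutation of the remaining $N$ elements all of whose cycles have length exceeding $m$. Writing $P_{N,m}$ for the number of such ``long-cycle'' permutations of $[N]$ and $p_N := P_{N,m}/N!$, the count gives
\[ \P\bigl(c_i(\pi)=c_i, \, i=1,\ldots,m\bigr) = \frac{p_{n-k}}{\prod_{i=1}^m c_i!\, i^{c_i}}. \]
Since the hypothesis $c_1+2c_2+\cdots+mc_m\le n-m-1$ is exactly $N\ge m+1$, the proposition reduces to proving the two-sided estimate $\frac{1}{2m+2}\le p_N\le \frac{1}{m+1}$ for all $N\ge m+1$.

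The plan is to derive a clean recursion for $p_N$ and then use it inductively. Conditioning on the length $\ell$ of the cycle containing the element $1$ (which must satisfy $m<\ell\le N$), there are $(N-1)!/(N-\ell)!$ ways to specify that cycle and $P_{N-\ell,m}$ ways to complete the permutation on the remaining elements. Dividing through by $N!$ telescopes to
\[ p_N = \frac{1}{N}\sum_{\ell=m+1}^N p_{N-\ell} = \frac{1}{N}\sum_{j=0}^{N-m-1} p_j\qquad (N\ge 1), \]
with boundary values $p_0=1$ and $p_1=\cdots=p_m=0$.

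Now the bound follows by induction on $N$. The base case $N=m+1$ gives $p_{m+1}=\frac{1}{m+1}\cdot p_0 = \frac{1}{m+1}$ exactly. When $m+1 < N \le 2m+1$ the only nonzero term in the sum is $p_0=1$, so $p_N = 1/N\in[1/(2m+1),1/(m+1)]$. For $N\ge 2m+2$ the inductive hypothesis gives $\frac{1}{2m+2}\le p_j\le \frac{1}{m+1}$ for the $N-2m-1$ indices $j=m+1,\ldots,N-m-1$, and combined with $p_0=1$ this yields
\[ p_N \ge \frac{1}{N}\Bigl(1+\frac{N-2m-1}{2m+2}\Bigr) = \frac{N+1}{N(2m+2)}\ge \frac{1}{2m+2}, \]
\[ p_N \le \frac{1}{N}\Bigl(1+\frac{N-2m-1}{m+1}\Bigr) = \frac{N-m}{N(m+1)}\le \frac{1}{m+1}. \]
The argument is entirely elementary once the recursion is in hand; the only point requiring care is that the anomalous $p_0=1$ contribution is what forces the lower constant to be $2m+2$ rather than $m+1$, accounting for the factor of two between the two sides of the claimed bound.
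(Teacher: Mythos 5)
Your argument is correct and follows essentially the same route as the paper's: the initial factorisation via Cauchy's formula (Lemma~\ref{Cauchy}) reduces everything to bounding the proportion $p_N$ of permutations of $N$ letters all of whose cycles have length exceeding $m$, which is precisely what the paper's Lemma~\ref{sieve} does. Your recursion for $p_N$, obtained by conditioning on the cycle through the element $1$, is the same identity the paper derives by summing $|\sigma|$ over cycle divisors, and the base cases and inductive bounds coincide step for step.
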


We will prove this shortly, but first let us fix some notation. As every permutation $\pi \in \cS_n$ factors uniquely as a product of disjoint cycles, in keeping with the analogy with integers we say that any product of these cycles, including the empty product, is a \emph{factor} or \emph{divisor} of $\pi$. The sets induced by these factors are precisely the invariant sets of $\pi$. We make the following further definitions:

\begin{itemize}
 \item $\mathcal{C}_{k,n}$ is the set of cycles of length $k$ in $\cS_n$;
 \item $|\sigma|$ is the length of any factor $\sigma$ (of some permutation in $\cS_n$);
 \item $\tau|\pi$ means that $\tau$ is an invariant set or divisor of $\pi$.
\end{itemize}

The following lemma is a slight generalization of the well known formula of Cauchy.

\begin{lemma}\label{Cauchy}
Let $1\le m\le n$, and let $c_1,\ldots,c_m$ be non-negative integers with $t=c_1+2c_2+\cdots+mc_m\le n$. Then the number of ways of choosing $c_1+\cdots+c_m$ disjoint cycles consisting of $c_i$ cycles in $\cC_{i,n}$ for $1\le i\le m$ is
 \[
 \frac{n!}{(n-t)!} \prod_{j=1}^m \frac{1}{c_j! j^{c_j}}.
\]
\end{lemma}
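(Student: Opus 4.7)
The plan is to prove this by a direct double-counting argument that is essentially the proof of the classical Cauchy formula (which is the case $t=n$). I would begin by counting ordered sequences: there are exactly $n!/(n-t)!$ ways to list an ordered tuple $(a_1,\ldots,a_t)$ of $t$ distinct elements from $\{1,\ldots,n\}$.

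Next I would convert each such sequence into an ordered tuple of disjoint cycles in a canonical way. Reading left to right, chop the sequence into consecutive blocks of lengths $\underbrace{1,\ldots,1}_{c_1},\underbrace{2,\ldots,2}_{c_2},\ldots,\underbrace{m,\ldots,m}_{c_m}$, which is possible since $c_1+2c_2+\cdots+mc_m=t$. Interpret each block $(a_{i+1},\ldots,a_{i+j})$ of length $j$ as the cycle $a_{i+1}\mapsto a_{i+2}\mapsto\cdots\mapsto a_{i+j}\mapsto a_{i+1}$. This produces a tuple of $c_1+\cdots+c_m$ pairwise disjoint cycles with exactly $c_i$ of length $i$, as desired; conversely, every such tuple (with a specified ordering among the cycles) comes from a unique sequence.

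Finally I would quantify the overcounting to pass from ordered to unordered collections. A fixed unordered collection of disjoint cycles with the prescribed length profile arises from exactly $\prod_{j=1}^m j^{c_j}\,c_j!$ of the sequences above: the factor $j^{c_j}$ accounts for the $j$ possible choices of starting point for each cycle of length $j$, and the factor $c_j!$ accounts for the $c_j!$ orderings of the $c_j$ blocks of length $j$ within the sequence. Dividing yields
\[
\frac{n!}{(n-t)!}\prod_{j=1}^m \frac{1}{c_j!\, j^{c_j}},
\]
as claimed.

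I do not anticipate any serious obstacle here; the only point requiring mild care is the bookkeeping for the overcounting, but the rotation and reordering operations on distinct cycles act independently, so the multiplicities factor cleanly across $j=1,\ldots,m$. The hypothesis $t\leq n$ is used only to ensure that the $t$ distinct elements can be chosen in the first place.
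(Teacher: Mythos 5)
Your proof is correct and is essentially the same elementary direct count as the paper's; the paper organizes it as ``choose the underlying subsets via a multinomial coefficient (dividing by $c_1!\cdots c_m!$), then cyclically arrange each subset of size $j$ in $(j-1)!$ ways,'' whereas you count ordered $t$-sequences and divide out the $j^{c_j}\,c_j!$ symmetries per length~$j$, but these are two standard bookkeepings of one and the same argument and lead to identical algebra.
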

\begin{proof}
First count the number of ways of choosing the subsets that make up the cycles, and then multiply by the number of ways to arrange the elements of these subsets into cycles. The result is
\[
 \binom{n}{\underbrace{1 \cdots 1}_{c_1} \underbrace{2 \cdots 2}_{c_2} \cdots \underbrace{m\cdots m}_{c_m}} 
 \frac{1}{c_1! \cdots c_m!} \, \times \, \prod_{j=1}^m (j-1)!^{c_j},
\]
which simplifies to the claimed expression.
\end{proof}
 
Our next lemma is an analogue for permutations of a basic lemma from sieve theory.
 
 \begin{lemma}\label{sieve}
Suppose that $m,n$ are integers with $1 \leq m \leq n$. Let $\pi \in \cS_n$ be chosen uniformly at random. Then
\[ \frac{1}{2m} \leq \P(\mbox{$\pi$ has no cycle of length $< m$}) \leq \frac{1}{m}.\] \end{lemma}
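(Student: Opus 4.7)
The plan is to exploit the classical fact that for a uniformly random $\pi\in\cS_n$, the length $L$ of the cycle through the element $1$ is uniformly distributed on $\{1,\dots,n\}$, and that conditional on $L=k$ the restriction of $\pi$ to the remaining $n-k$ elements is uniformly distributed in $\cS_{n-k}$. Writing $P_m(n)$ for the probability in the statement, and adopting the conventions $P_m(0)=1$ and $P_m(j)=0$ for $1\le j<m$, conditioning on $L$ then yields
\[
P_m(n) \;=\; \frac{1}{n}+\frac{1}{n}\sum_{j=m}^{n-m}P_m(j)\qquad (n\ge m),
\]
where the sum is empty when $m\le n<2m$ (in which case $P_m(n)=1/n$ directly), and the isolated $1/n$ comes from the $L=n$ contribution. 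The only values of $L$ that do not immediately force a cycle of length $<m$ are $L=n$ and $m\le L\le n-m$; any other value produces either a short cycle at the start or a residual permutation of size strictly between $0$ and $m$, which is forced to have a short cycle.

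From here I would prove the two inequalities by induction on $n$. The base cases $m\le n\le 2m-1$ are immediate from $1/n\in(\tfrac{1}{2m},\tfrac{1}{m}]$. For the inductive step $n\ge 2m$, substituting the hypothesis $\tfrac{1}{2m}\le P_m(j)\le \tfrac{1}{m}$ term-by-term into the recursion gives
\[
P_m(n) \;\le\; \frac{1}{n}+\frac{n-2m+1}{nm} \;=\; \frac{n-m+1}{nm}\;\le\; \frac{1}{m},
\]
and symmetrically
\[
P_m(n) \;\ge\; \frac{1}{n}+\frac{n-2m+1}{2mn} \;=\; \frac{n+1}{2mn} \;\ge\; \frac{1}{2m}.
\]

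I do not anticipate a serious obstacle in this approach: the bounds $1/m$ and $1/(2m)$ are essentially fixed points of the recursion, so the induction closes without any loss. The only point requiring care is aligning the range of summation with the base cases for $n<2m$, so that the leading $1/n$ term is correctly identified as the $L=n$ contribution rather than being double-counted. An alternative via the generating-function identity $P_m(n) = [z^n]\frac{1}{1-z}\exp\bigl(-\sum_{k=1}^{m-1}z^k/k\bigr)$ combined with inclusion-exclusion is also available, but would require controlling an alternating sum, so the cycle-through-$1$ conditioning argument appears to be the cleaner route.
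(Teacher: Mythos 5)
Your proof is correct and is essentially the paper's proof: you arrive at exactly the recursion $P_m(n) = \frac{1}{n} + \frac{1}{n}\sum_{j=m}^{n-m} P_m(j)$ (in the paper's combinatorial form, $n\,c(n,m) = n! + \sum_{m\le k\le n-m}\frac{n!}{(n-k)!}c(n-k,m)$) and close the same induction with the same arithmetic. The only cosmetic difference is in deriving the recursion: you condition on the length of the cycle through the element $1$, whereas the paper sums $n = \sum_{\sigma\mid\pi,\ \sigma\text{ a cycle}}|\sigma|$ over permutations $\pi$ with no short cycle and swaps the order of summation (a permutation analogue of the sieve-theoretic identity $\log n = \sum_{p^a\parallel n}\log p$) -- by the symmetry of the $n$ elements these produce the identical identity.
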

\emph{Remarks.} Both upper and lower bounds are best possible, since trivially the probability in question is exactly $1/n$ when $n/2<m\le n$ (if a permutation has no cycle of length $< m$, with $m$ in this range, then it must be an $n$-cycle). In fact, it is not difficult to prove an asymptotic formula
 $ \sim \omega(n/m)/m$ ($n\to\infty$, $m\to\infty$, $m\le n$) for the probability in question, where $\omega$ is Buchstab's function
 and $\omega(u)\to e^{-\gamma}$ as $u\to\infty$ \cite[Theorem 2.2]{granville}.

 \begin{proof}
 (See the proof of \cite[Theorem 2.2]{granville}). We phrase the proof combinatorially rather than probabilistically; thus let $c(n,m)$ be the number of permutations of $\cS_n$ that have no cycles of length $<m$. We proceed by induction on $n$, the result being trivial when $n=1$.  Let 
 $\sum^*$ denote a sum over permutations with no cycle of length $<m$.  Using the fact that the sum 
 of lengths of cycles in a permutation in $\cS_n$ is $n$, we get
 \begin{align*}
  n c(n,m) &= {\sum_{\pi \in \cS_n}}^* n = {\sum_{\pi \in \cS_n}}^* \ssum{\sigma|\pi \\ \sigma\text{ a cycle}} |\sigma| = 
  \sum_{k\ge m} k \sum_{\sigma \in \cC_{k,n}} {\ssum{\pi \in \cS_n \\ \sigma | \pi}}^* 1 \\
  &= \sum_{m\le k\le n-m} k \sum_{\sigma \in \cC_{k,n}} c(n-k,m) + \sum_{\sigma\in \cC_{n,n}} n \\
  &= n! + \sum_{m\le k\le n-m} \frac{n!}{(n-k)!} c(n-k,m).
 \end{align*}
If $\frac{n}{2} <m \le n$, then $c(n,m)=\frac{n!}{n}$ and the result follows.  Otherwise, by the induction hypothesis,
\[
 n c(n,m) \le n! + \sum_{m\le k\le n-m} \frac{n!}{m} = n! \( 1 + \frac{n-2m+1}{m} \) \le \frac{n! \cdot n}{m}
\]
and
\[
 n c(n,m) \ge n! + \sum_{m\le k\le n-m} \frac{n!}{2m} = n! \( 1 + \frac{n-2m+1}{2m} \) \ge \frac{n! \cdot n}{2m}.
 \qedhere
\]
\end{proof}

It is now a simple matter to establish Proposition \ref{sieve2}. 

\begin{proof}[Proof of Proposition \ref{sieve2}]
 Let $t=c_1+2c_2+\cdots+mc_m$.  For each choice of the $c_1+\cdots+c_m$ disjoint cycles consisting of $c_j$ cycles from $\cC_{j,n}$ ($1\le j\le m$), there are $c(n-t,m+1)$ permutations $\pi\in \cS_n$ containing these cycles as factors and no other cycles of length at most $m$, where $c(n-t, m+1)$ is the number of permutations on $n-t$ letters with no cycle of length $< m+1$, as in the proof of Lemma \ref{sieve}. Applying Lemmas \ref{Cauchy} and \ref{sieve} completes the proof.
\end{proof}

\section{The local-to-global principle}\label{first-red}

As in the introduction, let $X_1, X_2, \dots$ be independent random variables with distribution $X_j \deq \Pois(1/j)$. We record here that
\begin{equation}\label{ELXk}
\E |\sL(\X_k)| = \sum_{c_1,\ldots,c_k\ge 0} |\sL(\c)| \P (X_1=c_1)\cdots \P (X_k=c_k) =
e^{-h_k} \sum_{c_1,\ldots,c_k\ge 0} \frac{|\sL(\c)|}{\prod_{i=1}^k c_i! i^{c_i}},
\end{equation}
where $h_k=1+\frac12 + \cdots + \frac{1}{k}$. We also record the inequalities
\begin{equation}\label{hk}
\log(k+1) \le h_k \le 1 + \log k, \qquad (k\geq 1)
\end{equation}
which may be proved, for example, by summing the obvious inequalities $\frac{1}{n+1} \le \int_n^{n+1} dt/t \le \frac{1}{n}$.

\begin{lemma}\label{Lclem}
Let $k\in \N$, $c_1,\ldots,c_k\ge 0$, $I\subset [k]$ and $c_i'=c_i$ for $i\not\in I$, $c'_i=0$ for $i\in I$.
then
$$|\sL(\c)| \le  |\sL(\c')| \prod_{i\in I} (c_i+1).$$
\end{lemma}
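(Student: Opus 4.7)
The plan is to exploit the additive structure of $\sL(\c)$ and split each representation according to whether indices lie in $I$ or not. Concretely, every element $s\in\sL(\c)$ can be written as
\[
s \;=\; \sum_{i\in I} i m_i \;+\; \sum_{i\notin I} i m_i
\]
with $0\le m_j\le c_j$. The second sum is an element of $\sL(\c')$, since $c'_i=c_i$ for $i\notin I$ and $c'_i=0$ (forcing $m_i=0$) for $i\in I$. The first sum lies in the set
\[
T \;=\; \Bigl\{\sum_{i\in I} i m_i : 0\le m_i \le c_i\Bigr\},
\]
which, being the image of $\prod_{i\in I}\{0,1,\dots,c_i\}$ under a map, has cardinality at most $\prod_{i\in I}(c_i+1)$.

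Therefore $\sL(\c)\subseteq T + \sL(\c')$, and the sumset bound $|T+\sL(\c')|\le |T|\,|\sL(\c')|$ gives
\[
|\sL(\c)| \;\le\; |T|\cdot |\sL(\c')| \;\le\; |\sL(\c')|\prod_{i\in I}(c_i+1),
\]
as desired.

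There is essentially no obstacle here: the claim is a structural observation about how allowing extra coordinates can only blow up the size of $\sL$ by a factor equal to the number of choices for those coordinates. The only thing to be careful about is to note that the decomposition of $s$ need not be unique, but the inclusion $\sL(\c)\subseteq T+\sL(\c')$ needs only existence of \emph{some} decomposition for each $s$, which is immediate from the definition.
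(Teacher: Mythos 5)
Your argument is correct and is exactly the paper's argument, just spelled out: the paper notes that $\sL(\c)$ is a union of $\prod_{i\in I}(c_i+1)$ translates of $\sL(\c')$, which is precisely your inclusion $\sL(\c)\subseteq T+\sL(\c')$ together with the bound $|T|\le\prod_{i\in I}(c_i+1)$. Nothing to add.
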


\begin{proof}
Clearly, $\sL(\c)$ is the union of $\prod_{i\in I} (c_i+1)$ translates of $\sL(\c')$. 
\end{proof}

\begin{lemma}\label{lem555}
Suppose that $\ell' \leq \ell$. Then
\[ \frac{1}{\ell}\E |\mathscr{L}(\X_{\ell})| \le \frac{1}{\ell'} \E |\mathscr{L}(\X_{\ell'})|.\]
\end{lemma}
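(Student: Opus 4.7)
The plan is to exhibit a pointwise (almost sure) comparison between $|\sL(\X_\ell)|$ and $|\sL(\X_{\ell'})|$ by applying Lemma~\ref{Lclem} to the last $\ell-\ell'$ coordinates, then take expectations using independence of the $X_i$.

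More concretely, view $\X_\ell$ as the vector $(X_1,\dots,X_\ell)$ and apply Lemma~\ref{Lclem} with $I=\{\ell'+1,\dots,\ell\}$. The truncated vector $\c'$ has zeros in those positions, and a coordinate equal to zero contributes nothing to the defining sums in~\eqref{Lc}, so $|\sL(\c')|=|\sL(\X_{\ell'})|$. Lemma~\ref{Lclem} therefore yields the almost sure bound
\[
|\sL(\X_\ell)| \le |\sL(\X_{\ell'})| \prod_{i=\ell'+1}^{\ell}(X_i+1).
\]

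Now I would take expectations. Since $\sL(\X_{\ell'})$ depends only on $X_1,\dots,X_{\ell'}$, it is independent of $X_{\ell'+1},\dots,X_\ell$, and those latter variables are themselves independent of one another. Thus the expectation factorises:
\[
\E|\sL(\X_\ell)| \le \E|\sL(\X_{\ell'})| \prod_{i=\ell'+1}^{\ell}\E(X_i+1) = \E|\sL(\X_{\ell'})| \prod_{i=\ell'+1}^{\ell}\left(1+\frac{1}{i}\right).
\]
The product telescopes, since $1+1/i=(i+1)/i$, giving $(\ell+1)/(\ell'+1)$.

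Finally I would invoke the elementary inequality $(\ell+1)/(\ell'+1)\le \ell/\ell'$, valid precisely when $\ell'\le\ell$ (cross-multiplying reduces to $\ell'\le \ell$). Dividing by $\ell$ gives the desired conclusion. There is no real obstacle here; the only thing to be a little careful about is the zero-padding observation that $|\sL(\c')|=|\sL(\X_{\ell'})|$, and the slight sharpening from $1/(\ell+1)$ to $1/\ell$ at the end, which relies on the specific form of the telescoped product rather than on any subtler probabilistic fact.
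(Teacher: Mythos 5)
Your argument is precisely the one in the paper: apply Lemma~\ref{Lclem} with $I=\{\ell'+1,\dots,\ell\}$, take expectations using independence to telescope $\prod_{i=\ell'+1}^\ell(1+1/i)=(\ell+1)/(\ell'+1)$, and finish with $(\ell+1)/(\ell'+1)\le\ell/\ell'$. There is nothing further to add.
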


\begin{proof}
By Lemma \ref{Lclem}, $|\sL(\X_{\ell})| \leq (1+X_{\ell' + 1})\cdots(1+X_{\ell})|\mathscr{L}(\X_{\ell'})|.$ Thus by independence,
\[ \E|\mathscr{L}(\X_{\ell})| \le \bigg( \prod_{i = \ell'+1}^{\ell} \E (1+X_i) \bigg)  \E |\sL(\X_{\ell'})|  = \frac{\ell+1}{\ell'+1} \E |\sL(\X_{\ell'})| \leq \frac{\ell}{\ell'}\E|\sL(\X_{\ell'})|. \qedhere
\]
\end{proof}

We also need to compute the mixed moments of $|\sL(\X_k)|$ with powers of some $X_j$. Recall that the $m$th moment $\E X^m$, if $X \deq \Pois(1)$, is the $m$th Bell number $B_m$. The sequence of Bell numbers starts $1,2,5,15, 52, 203,\dots$. 

\begin{lemma}\label{lem-sum-gen}
Suppose that $j_1,\dots, j_h \leq k$ are distinct integers and that $a_1,\dots, a_h$ are positive integers. Then
\[ \E |\mathscr{L}(\X_k)| X_{j_1}^{a_1} \cdots X_{j_h}^{a_h} \leq \frac{C_{a_1,\dots, a_h}}{j_1 \dots j_h} 
\E |\mathscr{L}(\X_k)|.\]
We may take $C_{a_1,\dots, a_h} = \prod_{i = 1}^h (B_{a_i}+B_{a_i+1})$. In particular we may take $C_1  = 3$.
\end{lemma}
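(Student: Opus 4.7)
The plan is to decouple the factor $X_{j_1}^{a_1} \cdots X_{j_h}^{a_h}$ from $|\mathscr{L}(\X_k)|$ using Lemma \ref{Lclem}, thereby reducing to a product of one-variable Poisson moment computations.

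Set $I = \{j_1,\dots,j_h\}$ and let $\X_k' = (X_1',\dots,X_k')$ be the vector obtained from $\X_k$ by replacing $X_{j_i}$ by $0$ for each $i$. By Lemma \ref{Lclem},
\[
|\mathscr{L}(\X_k)| \le |\mathscr{L}(\X_k')| \prod_{i=1}^h (1+X_{j_i}).
\]
Multiplying by $X_{j_1}^{a_1}\cdots X_{j_h}^{a_h}$ and taking expectations, I would then exploit the crucial observation that $\mathscr{L}(\X_k')$ is a function of the variables $\{X_j : j \notin I\}$, hence independent of $X_{j_1},\dots,X_{j_h}$. Independence therefore gives
\[
\E |\mathscr{L}(\X_k)| X_{j_1}^{a_1}\cdots X_{j_h}^{a_h} \le \E |\mathscr{L}(\X_k')| \prod_{i=1}^h \E\bigl[(1+X_{j_i})X_{j_i}^{a_i}\bigr].
\]
Since zeroing out coordinates only shrinks the set $\mathscr{L}$, we have $|\mathscr{L}(\X_k')| \le |\mathscr{L}(\X_k)|$ pointwise, hence $\E|\mathscr{L}(\X_k')| \le \E|\mathscr{L}(\X_k)|$.

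It remains to show $\E[(1+X_{j_i})X_{j_i}^{a_i}] \le (B_{a_i}+B_{a_i+1})/j_i$. This is the one computational step. For $Y\deq\Pois(1)$ one has the classical identity $\E Y^m = B_m$ (Dobi\'{n}ski's formula). More generally, for $X\deq\Pois(\lambda)$ the moment expands as $\E X^m = \sum_{r=1}^m S(m,r)\lambda^r$, where $S(m,r)$ are Stirling numbers of the second kind; when $\lambda = 1/j \le 1$ we can bound $\lambda^r \le \lambda$ for each $r\ge 1$, giving $\E X^m \le \lambda \sum_r S(m,r) = B_m/j$. Applied with $m = a_i$ and $m = a_i+1$ this yields
\[
\E\bigl[(1+X_{j_i})X_{j_i}^{a_i}\bigr] = \E X_{j_i}^{a_i} + \E X_{j_i}^{a_i+1} \le \frac{B_{a_i}+B_{a_i+1}}{j_i},
\]
and multiplying over $i$ produces the claimed constant $C_{a_1,\dots,a_h} = \prod_i (B_{a_i}+B_{a_i+1})$. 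The special case $C_1 = B_1 + B_2 = 1 + 2 = 3$ is immediate.

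The only conceptually nontrivial step is the first: recognizing that Lemma \ref{Lclem} converts the presence of $X_{j_i}$ inside $\mathscr{L}(\X_k)$ into the harmless factor $(1+X_{j_i})$ attached to an $\mathscr{L}$ that no longer depends on $X_{j_i}$. Everything else is a routine Poisson moment estimate, and the Bell number comparison $\E X^m \le \lambda B_m$ (valid thanks to $\lambda \le 1$) is what keeps the constants clean.
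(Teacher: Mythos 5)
Your proof is correct and follows essentially the same route as the paper: zero out the coordinates $j_1,\dots,j_h$, apply Lemma~\ref{Lclem} to extract the factors $(1+X_{j_i})$, invoke independence, and bound the Poisson moments via $\E X^m \le \lambda B_m$ for $\lambda\le 1$ (stated in the paper through the Touchard polynomial, which is exactly the Stirling-number expansion you use). There is no substantive difference.
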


\begin{proof}
Define $\X'_k$ by putting $X'_{j_1}=\cdots=X'_{j_h}=0$ and $X'_j=X_j$ for all other $j$. By Lemma~\ref{Lclem}, we have
\[ |\mathscr{L}(\X_k)| \leq |\mathscr{L}(\X'_k)|(1+X_{j_1})\cdots(1+X_{j_h}).\]
Thus by independence
\begin{equation}\label{eqmomentbound}
\E |\mathscr{L}(\X_k)|X_{j_1}^{a_1}\cdots X_{j_h}^{a_h} \leq \E|\mathscr{L}(\X'_k)| \prod_{i=1}^h (\E X_{j_i}^{a_i} + \E X_{j_i}^{a_{i+1}}).
\end{equation}
For $X \deq \Pois(\lambda)$ we have $\E X^m = \phi_m(\lambda)$, where $\phi_m(\lambda)$ is the $m$-th Touchard (or Bell) polynomial, a polynomial with positive coefficients and zero constant coefficient. If $\lambda \le 1$, it follows that $\E X^m \leq \lambda B_m$ for $m \geq 1$. The result follows immediately from this, \eqref{eqmomentbound}, and the observation that $ \E|\mathscr{L}(\X'_k)| \le \E|\mathscr{L}(\X_k)|$.
\end{proof}

We turn now to the proof of Proposition~\ref{first-reduction}. 
In what follows write $S(\X_\ell) = X_1 + 2X_2 + \dots + \ell X_{\ell} = 
\max \mathscr{L}(\X_\ell)$. 
We will treat the lower bound and upper bound in Proposition \ref{first-reduction} separately, 
the former being somewhat more straightforward than the latter. 

\begin{proof}[Proof of Proposition \ref{first-reduction} (Lower bound)] If $k<40$ then $i(n,k)\order i(\infty,k)\order 1\order \frac{1}{k}\E|\sL(\X_k)|$, so we may assume $k\geq 40$. Let $r=\fl{k/20}$ (so $r\ge 2$), and consider the permutations $\pi=\a \sigma_1 \sigma_2 \beta \in \cS_n$, where $\sigma_1$ and $\sigma_2$ are cycles, $|\a| \le 4r < |\sigma_1| < |\sigma_2| < 16r$, all cycles in $\a$ have length $\le r$,
all cycles in $\beta$ have length at least $16r$, and $\a \sigma_1 \sigma_2$ has a fixed set of size $k$. Because of the size restrictions on $\alpha,\sigma_1,\sigma_2$, if $\a$ is of type $\c=(c_1,\ldots,c_r)$, with $c_i$ cycles of length $i$ for $1\le i\le r$, then the last condition is equivalent to $k-|\sigma_1|- |\sigma_2| \in \sL(\c)$. In particular $|\sigma_1| +|\sigma_2|\le k$, and hence $n-|\a| - |\sigma_1|- |\sigma_2| \ge \frac45 k \ge 16r$. Fix $\c$ and $\ell_1, \ell_2$ with $4r<\ell_1<\ell_2<16r$ such that $k-\ell_1-\ell_2 \in \sL(\c)$. By Proposition \ref{sieve2}, the probability that a random $\pi \in \cS_n$ has $c_i$ cycles of length $i$ ($1\le i\le r$), one cycle each of length $\ell_1, \ell_2$ and no other cycles of length $<16r$ is at least
\[
 \frac{1}{32r \ell_1 \ell_2 \prod_{i=1}^r c_i! i^{c_i}} \ge \frac{1}{2^{13} r^3 \prod_{i=1}^r c_i! i^{c_i}}.
\]
For any $\ell_1$ satisfying $4r+1 \le \ell_1 \le 8r-1$, there are $|\sL(\c)|$ admissible values of $\ell_2>\ell_1$ for which $k-\ell_1-\ell_2 \in \sL(\c)$, since $\max \sL(\c) \le 4r \le k/5$.  We conclude that
\[
 i(n,k) \ge \frac{4r-1}{2^{13} r^3} \ssum{c_1,\cdots,c_{r}\ge 0 \\ S(\c) \le 4r} \frac{|\sL(\c)|}{\prod_{i=1}^r c_i! i^{c_i}}.
\]
As in \eqref{ELXk}, the sum above equals $e^{h_r}\E |\sL(\X_r)| 1_{S(\X_r)\le 4r}$.
Hence, by \eqref{hk}, we see that
\[
 i(n,k) \ge \frac{1}{2^{11} r}  \E |\sL(\X_r)| 1_{S(\X_r)\le 4r}.
\]
To estimate this, we use the inequality
\[ 1_{S(\X_r) \leq 4r} \geq 1 - \frac{S(\X_r)}{4r}.\] 
By Lemma \ref{lem-sum-gen} we have
\[ \E |\mathscr{L}(\X_r)| S(\X_r)  = \sum_{j = 1}^{r} \E |\mathscr{L}(\X_r)| j X_j  \leq 3r \E |\mathscr{L}(\X_r)|.\]
It follows that 
\[ i(n,k) \geq \frac{1}{2^{13} r} \E |\mathscr{L}(\X_r)|.\]
Finally, the lower bound in Proposition \ref{first-reduction} is a consequence of this and Lemma 
\ref{lem555}\footnote{Strictly for the purposes of proving our main theorem, this appeal to Lemma \ref{lem555} 
is unnecessary. However, that lemma is straightforward and it is more aesthetically pleasing to have 
$\E|\mathscr{L}(\X_k)|$ in the lower bound for $i(n,k)$ rather than $\E |\mathscr{L}(\X_r)|$.}.
\end{proof}

\begin{proof}[Proof of Proposition \ref{first-reduction} (Upper bound)]
Temporarily impose a total ordering on the set of all cycles $\bigcup_{k=1}^n \cC_{k,n}$, first ordering them by length, then imposing an arbitrary ordering of the cycles of a given length. Let $\pi\in \cS_n$ have an invariant set of size $k$.  Let $k_1=k$ and $k_2=n-k$. Then $\pi=\pi_1 \pi_2$, where $\pi_j$ is a product of cycles which, all together, have total length $k_j$, for $j=1,2$.  For some $j\in\{1,2\}$, the largest cycle in $\pi$, with respect to our total ordering, lies in $\pi_{3-j}$. Let $\sigma$ be the largest cycle in $\pi_j$, and note that $|\sigma|\leq\min(k_1,k_2)=k$. Write $\pi=\a \sigma \b$, where $\a$ is the product of all cycles dividing $\pi$ which are smaller than $\sigma$ and $\beta$ is the product of all cycles which are larger than $\sigma$. In particular $|\beta| \geq |\sigma|$ since $\beta$ contains the largest cycle in $\pi$ as a factor, and thus $|\sigma| \le |\beta| = n - |\sigma| - |\a|$.

By definition of $\sigma$ and $\alpha$, $\a \sigma$ has a divisor of size $k_j$. Suppose $|\sigma|=\ell$ and $\c=(c_1,c_2,\ldots,c_\ell)$ represents how many cycles $\a$ has of length $1,2,\ldots,\ell$, respectively. Then $k_j-\ell \in \sL(\c)$. For $\ell$ and $\c$ satisfying this last condition, the number of possible pairs $\a,\sigma$ is at most (by Lemma \ref{Cauchy})
\[
 \frac{n!}{(n-|\a|-|\sigma|)!}  \prod_{i<\ell} \frac{1}{c_i! i^{c_i}} \, \times \frac{1}{(c_\ell+1)! \ell^{c_{\ell}+1}}
\le  \frac{n!}{\ell (n-|\a|-|\sigma|)!} \prod_{i \le \ell} \frac{1}{c_i! i^{c_i}}.
\]
Given $\a$ and $\sigma$, since $|\sigma|\leq n-|\a|-|\sigma|$, Lemma~\ref{sieve} implies that the number of choices for $\beta$ is at most $(n-|\a|-|\sigma|)!/|\sigma|$. Thus
\[
 i(n,k) \le \sum_{j=1}^2 \sum_{\ell=1}^k \frac{1}{\ell^2} \ssum{c_1,\ldots,c_\ell \ge 0 \\ k_j-\ell \in
 \sL(\c)} \prod_{i \le \ell} \frac{1}{c_i! i^{c_i}} 
 = \sum_{j=1}^2 \sum_{c_1,\ldots,c_k \ge 0} \; \prod_{i \le k} \frac{1}{c_i! i^{c_i}} \ssum{m(\c) \le \ell
 \le k \\ k_j-\ell \in  \sL(\c)} \frac{1}{\ell^2},
\]
where $m(\c)=\max\{i:c_i>0\}\cup\{1\}$. With $\c$ fixed, note that $\ell \ge \max(m(\c),k_j-S(\c))$. Also, the number of $\ell$ such that $k_j-\ell\in \sL(\c)$ is at most $|\sL(\c)|$. Thus, the innermost sum on the right side above is at most
\[
 \frac{|\sL(\c)|}{\max(m(\c),k_j-S(\c))^2}.
\]
Like \eqref{ELXk}, using \eqref{hk} we thus see that
\begin{equation}\label{upper-1}
  i(n,k) \le 2ek \, \E \, \frac{|\sL(\X_k)|}{\max(m(\X_k),k-S(\X_k))^2}.
\end{equation}
To bound this we use the inequality
\[
 \frac{1}{\max(m,k-S)^2} \le \frac{4}{k^2} \(1 + \frac{S^2}{m^2}\),
\]
which can be checked in the cases $S\geq k/2$ and $S\leq k/2$ separately. It follows from this and \eqref{upper-1} that
\begin{equation}\label{upper-1a}
i(n,k) \leq 8e \frac1k \E |\sL(\X_k)| + 8e \frac1k \E\frac{|\sL(\X_k)|S(\X_k)^2}{m(\X_k)^2} .
\end{equation}

The first of these two terms is what we want, but the second requires a keener analysis. By conditioning on $m=m(\X_k)$ we have
\begin{align*}
 \E \frac{|\sL(\X_k)| S(\X_k)^2}{m(\X_k)^2} &= \sum_{m=1}^k \frac{1}{m^2}
 \ssum{c_1,\dots,c_m\ge 0 \\ c_m\ge 1} |\sL(\c)| S(\c)^2 \P (\X_m=\c) \P (X_{m+1}=\cdots=X_{k_j}=0) \\
&= \sum_{m=1}^{k_j} \frac{1}{m^2} \E \, \Y_m S(\X_m)^2 1_{X_m\ge 1} \exp\bigg( - \sum_{j=m+1}^k \frac{1}{j} \bigg) \\
& \leq \frac{e}k \sum_{m=1}^k \frac1m \E \, \Y_m S(\X_m)^2 X_m.
\end{align*}
Here we have written $\Y_m=|\sL(\X_m)|$ for brevity, and in the last step we used the crude inequality $1_{X_m\ge 1} \le X_m$. Expanding $S(\X_m)^2 = (X_1 + 2X_2 + \dots + mX_m)^2$ and using \eqref{upper-1a}, we arrive at
\begin{equation}\label{upper-1b}
 i(n,k) \ll \frac1k \E |\sL(\X_k)| + \frac1{k^2} \sum_{m=1}^k
 \frac{1}{m}  \sum_{i,i'=1}^m i i' \E\Y_m X_i X_{i'} X_m.
\end{equation}
The innermost sum is estimated using Lemma \ref{lem-sum-gen}, splitting into various cases depending
on the set of distinct values among $i,i',m$.
\begin{description}
	\item[Case 1] $i, i', m$ all distinct. Then $ii'\E\Y_m X_i X_{i'} X_m \leq \frac{C_{1,1,1}}{m}\E \Y_m = \frac{27}{m} \E \Y_m$.
	\item[Case 2] $i = i' \neq m$. Then $ii'\E\Y_m X_i X_{i'} X_m \leq \frac{C_{1,2}i}{m} \E \Y_m \leq C_{1,2}\E\Y_m = 21\E \Y_m$.
	\item[Case 3] $i = i' = m$. Then $ii'\E\Y_m X_i X_{i'} X_m \leq C_3 m \E \Y_m = 20 m \E \Y_m$.
	\item[Case 4] $i \neq i' =  m$ or $i' \neq i = m$. In both cases $ii'\E \Y_m X_i X_{i'} X_m \leq 21\E \Y_m$.
\end{description}
Summing over all cases, it follows that
\[
	\sum_{i,i'=1}^m ii'\E\Y_m X_i X_{i'} X_m \ll m \E \Y_m.
\]
Since clearly $\E \Y_m \le \E \Y_k$ for every $m\leq k$ the result follows from this and \eqref{upper-1b}.
\end{proof}

%
%
\section{The lower bound in Proposition~\ref{average-result}}\label{lower}
%
%

In this section we prove the lower bound in Proposition \ref{average-result}, and hence the lower bound in our
main theorem.   We begin by noting that from \eqref{ELXk} and \eqref{hk} follows
\begin{equation}\label{eq221}
 \E |\sL(\X_k)| \ge \frac{1}{ek} \sum_{c_1,\ldots,c_k\ge 0} \frac{|\sL(\c)|}{\prod_{i=1}^k c_i! i^{c_i}}.
\end{equation}
If we fix $r=c_1+\cdots+c_k$, which we may think of as the number of cycles in a random permutation, then
\begin{equation}\label{eq221a}
 \sum_{c_1+\cdots +c_k=r} \frac{|\sL(\c)|}{\prod_{i=1}^k c_i! i^{c_i}} = \frac{1}{r!} \sum_{a_1,\ldots,a_r=1}^k
 \frac{|\sL^*(\ba)|}{a_1\cdots a_r}, \end{equation}
 where
 \begin{equation}\label{sLstar} \sL^*(\ba)=\Big\{\sum_{i\in I} a_i : I \subset [r] \Big\}.
\end{equation}
The equality is most easily seen by starting from the right side and setting $c_i=|\{j:a_j=i\}|$ for each $i$: 
then $\sL(\c) = \sL^*(\ba)$, $\prod_{i = 1}^k i^{c_i} = a_1 \cdots a_k$, and each $\c = (c_1,\dots, c_k)$ comes from $\frac{r!}{c_1! \cdots c_k!}$ different choices of $a_1,\dots, a_k$. 
One may think of $a_1,\ldots,a_r$ as the (unordered) cycle lengths in a random permutation, in this case
conditioned so that there are $r$ total cycles.  

Now let $J= \fl{\frac{\log k}{\log 2}}$ and suppose that $b_1,\ldots,b_J$ are arbitrary non-negative integers with sum $r$. Consider the part of the sum in which
\[
 b_i = \sum_{j=2^{i-1}}^{2^i-1} c_j \;\; (i=1,2,\ldots,J), \qquad c_j=0 \; (j>2^J-1).
\]
Equivalently, suppose there are exactly $b_i$ of the $a_j$ in each interval $[2^{i-1},2^i-1]$. Writing $\DD(\bb) = \prod_{i=1}^J \{ 2^{i-1}, \ldots, 2^{i}-1 \}^{b_i}$, we have
\begin{equation}\label{eq222} 
\frac{1}{r!} \sum_{a_1,\ldots,a_r=1}^{2^J-1}
 \frac{|\sL^*(\ba)|}{a_1\cdots a_r} = \sum_{b_1,\dots, b_J} \frac{1}{b_1! \cdots b_J!} \sum_{\d \in \DD(\bb)} \frac{|\sL^*(\d)|}{d_1\cdots d_r}.\end{equation}
To see this, fix $b_1, \ldots, b_J$ and observe that there are $\frac{r!}{b_1! \cdots b_J!}$ ways to choose 
which $b_i$ of the variables $a_1,\ldots,a_r$ lie in $[2^{i-1},2^{i}-1]$ for $1\le i\le J$.
 
Combining \eqref{eq221}, \eqref{eq221a} and \eqref{eq222} gives
\[ \E |\sL(\X_k)| \gg \frac{1}{k} \sum_r \sum_{b_1+ \dots + b_J = r} \frac{1}{b_1! \cdots b_J!} \sum_{\d \in \DD(\bb)} \frac{|\sL^*(\d)|}{d_1 \cdots d_r}.  \] Thus in particular one has
\begin{equation}\label{eq223} \E |\sL(\X_k)| \gg \frac{1}{k}  \sum_{b_1+ \dots + b_J = J} \frac{1}{b_1! \cdots b_J!} \sum_{\d \in \DD(\bb)} \frac{|\sL^*(\d)|}{d_1 \cdots d_J}.  \end{equation}
(This may seem wasteful at first sight, but in fact a more careful -- though unnecessary -- analysis would reveal that the main contribution is from $r = J + O(1)$, so this is not in fact the case.) In the light of this, the motivation for proving the following lemma is clear. 

\begin{lemma}\label{sumd}
 For any $\bb=(b_1,\ldots,b_J)$ with $b_1 + \dots + b_J = J$ we have
 \[
 \sum_{\d \in \DD(\bb)} \frac{|\sL^*(\d)|}{d_1\cdots d_J} \gg \frac{(2\log 2)^{J}}{\sum_{i=1}^J 
 2^{b_1+\cdots+b_i-i}}.
 \]
\end{lemma}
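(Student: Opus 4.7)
The plan is a double Cauchy--Schwarz argument. First I would lower-bound $|\sL^*(\d)|$ pointwise by the inverse of an additive energy; then, a second Cauchy--Schwarz applied to the weighted sum reduces the whole estimate to an upper bound on an average additive energy. That average is finally controlled by organising the additive relations according to the largest dyadic block meeting the support of the coefficient vector, which is what produces the factor $\sum_i 2^{\sigma_i - i}$ (with $\sigma_i := b_1 + \cdots + b_i$).

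\textbf{Step 1: Pointwise energy bound.} For each $\d$, set $N_\d(s) = |\{I \subseteq [J] : \sum_{i \in I} d_i = s\}|$. Then $\sum_s N_\d(s) = 2^J$ and $|\sL^*(\d)|$ is the support size of $N_\d$, so Cauchy--Schwarz gives $|\sL^*(\d)| \ge 4^J/R(\d)$, where
\[R(\d) = \sum_s N_\d(s)^2 = \sum_{\epsilon \in \{-1,0,1\}^J} 2^{|\{i : \epsilon_i = 0\}|}\, 1_{\sum_i \epsilon_i d_i = 0}.\]

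\textbf{Step 2: Second Cauchy--Schwarz.} Let $W = \sum_{\d \in \DD(\bb)} 1/\prod d_i$. Applied to the weighted sum,
\[\sum_\d \frac{|\sL^*(\d)|}{\prod d_i} \ge 4^J \sum_\d \frac{1}{R(\d) \prod d_i} \ge \frac{4^J W^2}{\sum_\d R(\d)/\prod d_i}.\]
Since $\sum_{d = 2^{i-1}}^{2^i - 1} 1/d = \log 2 + O(2^{-i})$ for $i \ge 2$, factoring across blocks gives $W \asymp (\log 2)^{J - b_1}$. Thus it suffices to show
\[\sum_\d R(\d)/\prod d_i \ll 2^J (\log 2)^{J - 1 - b_1} \sum_{i=1}^J 2^{\sigma_i - i}.\]

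\textbf{Step 3: Average energy estimate.} Split $\sum_\d R(\d)/\prod d_i = \sum_\epsilon 2^{|\{i:\epsilon_i = 0\}|} T(\epsilon)$, where $T(\epsilon) = \sum_\d 1_{\sum \epsilon_i d_i = 0}/\prod d_i$. For $\epsilon \ne 0$, let $j^*$ be the largest dyadic block meeting $\operatorname{supp}(\epsilon)$, and pick $i^* \in \operatorname{supp}(\epsilon)$ lying in block $j^*$. The relation $\sum \epsilon_i d_i = 0$ then pins down $d_{i^*}$ in terms of the other $d_j$'s, so $1/d_{i^*} \le 2^{1 - j^*}$ while the remaining coordinates factor freely over their dyadic intervals. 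Combinatorially, there are $2^{|K|}$ signings with a given support $K$, and the number of $K \subseteq [J]$ whose max-block equals $j^*$ is at most $2^{\sigma_{j^*}}$. Summing over $j^*$ produces a total bound of the correct form, the factor $\sum_i 2^{\sigma_i - i}$ arising from the key saving $2^{\sigma_{j^*} - j^*}$ at each $j^*$. The $\epsilon = 0$ contribution $2^J W$ is absorbed because $\sum_i 2^{\sigma_i - i} \ge 1$.

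\textbf{Main obstacle.} The main technical work lies in Step 3: one needs to argue that eliminating the largest-block coordinate is optimal, so that the saving $2^{1 - j^*}$ from $1/d_{i^*}$ matches precisely the combinatorial count of supports with prescribed max-block, and to track the $(\log 2)$-powers carefully, especially handling block $1$ (where $1/d = 1$ rather than $\asymp \log 2$) so that they cancel cleanly in the final ratio $4^J W^2 / (\sum_\d R(\d)/\prod d_i)$ to yield $(2\log 2)^J / \sum_i 2^{\sigma_i - i}$.
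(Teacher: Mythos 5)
Your argument is essentially the same as the paper's: the two Cauchy--Schwarz applications in Steps 1--2 compose to give exactly the single Cauchy--Schwarz inequality used there (over the index set $(\d,\ell)$), and Step 3 is identical to the paper's estimation of $\sum_{\d,\ell} R(\d,\ell)^2/(d_1\cdots d_J)$, with $R(\d)=\sum_\ell R(\d,\ell)^2$, the pair $(I_1,I_2)$ encoded as the sign vector $\epsilon$, and the weight $2^{|\{i:\epsilon_i=0\}|}$ recovering the paper's count $2^{J+j-1}$ of pairs with $\max(I_1\triangle I_2)=j$. The core idea of pinning down the coordinate $d_{i^*}$ in the highest occupied block to obtain the saving $2^{1-j^*}$, and then summing the combinatorial count $\lesssim 2^{\sigma_{j^*}}$ against it to produce $\sum_i 2^{\sigma_i-i}$, is exactly the paper's mechanism.

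One bookkeeping point that would need repair in a full write-up: the assertion $W\asymp(\log 2)^{J-b_1}$ is not correct, since $\lambda_i=\sum_{2^{i-1}\le d<2^i}1/d$ satisfies $\lambda_i/\log 2=1+O(2^{-i})$ but is bounded away from $1$, so $W=\prod\lambda_i^{b_i}$ can exceed $(\log 2)^{J-b_1}$ by a factor growing like $c^J$ when a lot of mass $b_i$ sits in a small block $i\ge 2$. Consequently your stated intermediate target $\sum_\d R(\d)/\prod d_i\ll 2^J(\log 2)^{J-1-b_1}\sum_i 2^{\sigma_i-i}$ is not obviously provable. This is easily fixed by carrying the factor $W=\prod\lambda_i^{b_i}$ through symbolically: the estimate one actually proves is $\sum_\d R(\d)/\prod d_i\ll 2^J\bigl(\prod_i\lambda_i^{b_i}\bigr)\sum_i 2^{\sigma_i-i}$, the factor $\prod_i\lambda_i^{b_i}$ cancels against one copy of $W$ in $4^JW^2$, and one finishes using only the bound $\lambda_i\ge\log 2$ (so $W\ge(\log 2)^J$), exactly as the paper does. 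The combinatorial count in Step 3 is also slightly rough as stated --- the $2^{|K|}$ signings and the weight $2^{J-|K|}$ must be multiplied to the constant $2^J$ before summing over supports $K$ of a given max-block --- but that is a cosmetic issue, not a substantive one.
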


\begin{proof}
Given $\ell \in \N$, let $R(\d, \ell)$ be the number of $I \subset [J]$ with $\ell = \sum_{i \in I} d_i$ 
(One should think of the number of cycles with lengths summing to precisely $\ell$ in a random permutation.) Then 
$\sum_{\ell} R(\d,\ell) = 2^{J}.$  Also, define $\lambda_i=\sum_{j=2^{i-1}}^{2^i-1} 1/j$ for $1\le i\le J$ (thus $\lambda_i \approx \log 2$).
By Cauchy-Schwarz,
\begin{equation}\label{CauchyD}
\begin{split}
 2^{2J} \prod_{j=1}^J \lambda_j^{2b_j} &= \bigg( \sum_{\d\in\DD(\bb)} \frac{1}{d_1\cdots d_J} \sum_\ell 
 R(\d,\ell) \bigg)^2 \\
 &=\bigg( \sum_{\d\in\DD(\bb)} \frac{1}{d_1\cdots d_J} \sum_{\ell\in \sL^*(\d)}  R(\d,\ell) \bigg)^2 \\
 &\le \bigg( \sum_{\d\in\DD(\bb), \ell} \frac{R(\d,\ell)^2}{d_1\cdots d_J} \bigg)
 \bigg( \sum_{\d\in\DD(\bb)} \frac{|\sL^*(\d)|}{d_1\cdots d_J} \bigg).
\end{split}
\end{equation}
Our next aim is to establish an upper bound for the first sum on the right side.  We have
\begin{equation}\label{RS1}
\sum_{\d\in\DD(\bb), \ell} \frac{R(\d,\ell)^2}{d_1\cdots d_J} = \sum_{I_1, I_2 \subset [J]} S(I_1,I_2),
\end{equation}
 where 
 
 \[ S(I_1,I_2) = \sum_{\substack{\d \in \DD(\bb) \\ \sum_{i\in I_1} d_i = \sum_{i\in I_2} d_i. } } \frac{1}{d_1 \cdots d_J}\]

 If $I_1=I_2$, then evidently  $S(I_1,I_2) = \lambda_1^{b_1} \cdots \lambda_J^{b_J}$.  
If $I_1$ and $I_2$ are distinct, let $j=\max(I_1\triangle I_2)$ be the largest coordinate at 
which $I_1$ and $I_2$ differ.  With all of the quantities $d_i$ fixed except for $d_j$, 
we see that $d_j$ is uniquely determined by the relation $\sum_{i\in I_1} d_i = \sum_{i\in I_2} d_i$.  If we define $e(j)\in [J]$ uniquely by
\[
 b_1 + \cdots + b_{e(j)-1}+1 \le j \le b_1 + \cdots + b_{e(j)},
\]
then $d_j \ge 2^{e(j)-1}$, regardless of the choice of $d_1,\dots, d_{j-1}, d_{j+1},\dots, d_J$ and thus
\[
 S(I_1,I_2) \leq \prod_{\substack{i = 1 \\ i \neq j}}^J(\sum_{d_i} \frac{1}{d_i}) \cdot \frac{1}{2^{e(j) - 1}} = \frac{\lambda_1^{b_1} \cdots \lambda_J^{b_J} \lambda_{e(j)}^{-1}}{2^{e(j)-1}} \ll \frac{\lambda_1^{b_1} \cdots \lambda_J^{b_J}}{2^{e(j)}}.
\] (Here, the sums over $d_i$ are over the appropriate dyadic intervals required so that $\d \in \DD(\bb)$.)
Here we used the fact that $\lambda_i \asymp 1$; in fact one may note that $\lambda_{i} \geq \lambda_{i+1}$ for all $i$ (since $\frac{1}{n} \geq \frac{1}{2n} + 
\frac{1}{2n+1}$) and that $\lim_{i \rightarrow \infty} \lambda_i = \log 2$, so in fact $\lambda_i \geq \log 2$
for all $i$.

Since the number of pairs of subsets $I_1,I_2\subset[J]$ with $\max(I_1\triangle I_2)=j$ is exactly 
$2^{J+j-1}$,  we get from this and \eqref{RS1} that
\begin{align*}
\prod_{j=1}^J \lambda_j^{-b_j} \sum_{d\in\DD(\bb), \ell} \frac{R(\d,\ell)^2}{d_1\cdots d_J} &\ll
2^J + 2^J \sum_{j=1}^J 2^{j-e(j)} \
	= 2^J + 2^J \sum_{i=1}^J 2^{-i} \sum_{j:e(j)=i} 2^j\\
	&\ll 2^J + 2^J \sum_{i=1}^J 2^{b_1+\cdots+b_i-i}\\
	&\ll 2^J \sum_{i=1}^J 2^{b_1+\cdots+b_i-i}.
\end{align*}
Comparing with \eqref{CauchyD}, and using again that  $\lambda_i \geq \log 2$, completes the proof.
\end{proof}

Combining Lemma \ref{sumd} and \eqref{eq223}, we obtain
\begin{equation}\label{eqbeforecyclelemma}
\E |\mathscr{L}(\X_k)| \gg \frac{(2\log 2)^J}{k} \sum_{b_1+\cdots+b_J=J} \frac{1}{b_1! \cdots b_J! 
\sum_{i=1}^J 2^{b_1+\cdots+ b_i-i}}.
\end{equation}
Somewhat surprisingly, the right hand side here can be evaluated explicitly using the ``cycle lemma'', as in~\cite{ford-2}. The key trick is to add an additional averaging over the $J$ cyclic permutations of $b_1,\ldots,b_J$ to the inner summation.

\begin{lemma}\label{cyclelemma}
Let $x_1,\dots,x_J$ be positive reals such that $x_1\cdots x_J = 1$. Then the average of $\big(\sum_{i=1}^J x_1\cdots x_i\big)^{-1}$ over cyclic permutations of $x_1,\dots,x_J$ is exactly $1/J$.
\end{lemma}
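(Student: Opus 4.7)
The plan is to introduce the partial products $y_i = x_1 x_2 \cdots x_i$ for $1 \le i \le J$, with the convention $y_0 = 1$. The constraint $x_1 \cdots x_J = 1$ then reads $y_J = 1 = y_0$, which is what makes the cyclic structure work.

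First I would record how partial products transform under cyclic shifts. If we rotate $(x_1, \ldots, x_J)$ by $k$ positions to obtain $(x_{k+1}, x_{k+2}, \ldots, x_J, x_1, \ldots, x_k)$, then the $j$th partial product of the shifted sequence is $x_{k+1} x_{k+2} \cdots x_{k+j}$ with indices taken modulo $J$. Using the identity $x_{k+1} \cdots x_{k+j} = y_{k+j}/y_k$ when $k+j \leq J$, and $x_{k+1} \cdots x_J \cdot x_1 \cdots x_{k+j-J} = (y_J/y_k) \cdot y_{k+j-J} = y_{k+j-J}/y_k$ when $k+j > J$ (exploiting $y_J = 1$), I get
\[
\sum_{j=1}^J x_{k+1} x_{k+2} \cdots x_{k+j} \;=\; \frac{1}{y_k}\Bigl(\sum_{i=k+1}^{J} y_i + \sum_{i=1}^{k} y_i\Bigr) \;=\; \frac{S}{y_k},
\]
where $S := \sum_{i=1}^J y_i$ is independent of the shift.

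Next I would invert and sum over the $J$ cyclic shifts $k = 0, 1, \ldots, J-1$:
\[
\sum_{k=0}^{J-1} \Bigl(\sum_{j=1}^J x_{k+1} \cdots x_{k+j}\Bigr)^{-1} \;=\; \sum_{k=0}^{J-1} \frac{y_k}{S} \;=\; \frac{1}{S}\sum_{k=0}^{J-1} y_k.
\]
Since $y_0 = 1 = y_J$, the sum $\sum_{k=0}^{J-1} y_k$ equals $\sum_{i=1}^J y_i = S$, so the whole expression collapses to $1$. Dividing by $J$ gives the stated average of $1/J$.

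The proof is essentially bookkeeping with indices, and the only step that requires any attention is the pleasant telescoping cancellation in which the unknown quantity $S$ in the numerator of $\sum y_k$ exactly matches the $S$ in the denominator coming from the partial-product sum. This is the hallmark of the classical cycle lemma, and here it works cleanly because of the constraint $x_1 \cdots x_J = 1$ that forces $y_0 = y_J$.
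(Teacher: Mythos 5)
Your proof is correct and is essentially the same argument as the paper's, just written out more explicitly: the paper's one-line computation (multiplying numerator and denominator of the $t$-th term by $x_1\cdots x_t$) is exactly your observation that the shifted partial-product sum equals $S/y_k$, after which both reduce to $\sum_k y_k / S = 1$.
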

\begin{proof}
Reading indices modulo $J$ we have
\[
\sum_{t=1}^J \frac{1}{\sum_{i=1}^J x_{t+1}\cdots x_{t+i}} = \sum_{t=1}^J \frac{x_1\cdots x_t}{\sum_{i=1}^J x_1\cdots x_{t+i}} = 1.\qedhere
\]
\end{proof}

Applying the cycle lemma with $x_i = 2^{b_i - 1}$ gives (noting that cyclic permutation of the variables is a 1-1 map on the set of $(b_1,\dots, b_J)$ with $b_1 + \dots + b_J = J$) that
\[ \sum_{b_1+\cdots+b_J=J} \frac{1}{b_1! \cdots b_J! 
\sum_{i=1}^J 2^{b_1+\cdots+ b_i-i}} = \frac{1}{J} \sum_{b_1+\cdots+b_J=J} \frac{1}{b_1!\cdots b_J!} = \frac{1}{J} \cdot \frac{J^J}{J!},\] the second equality being a consequence of the multinomial theorem.

Substituting into \eqref{eqbeforecyclelemma}, and recalling that $J=\frac{\log k}{\log 2} + O(1)$, the lower bound in Proposition~\ref{average-result} now follows from Stirling's formula.

\section{The upper bound in Proposition~\ref{average-result}}\label{upper}

In this section we turn to the upper bound in Proposition \ref{average-result}, that is to say the bound
\[ \E |\mathscr{L}(\X_k)| \ll k^{\alpha} (\log k)^{-3/2}.\]
As with the lower bound, we condition on the number of cycles of length at most $k$ in a random permutation.
Recall from \eqref{sLstar} the definition of $\sL^*(\ba)$:
\[ \sL^*(\ba) = \Big\{\sum_{i\in I} a_i : I \subset [r] \Big\}.\]
From \eqref{ELXk}, \eqref{hk} and \eqref{eq221a} we have
\begin{equation}\label{upper-start}
\E |\sL(\X_k)| \le \frac{1}{k} \sum_r \frac{1}{r!}  \sum_{a_1,\ldots,a_r=1}^k
 \frac{|\sL^*(\ba)|}{a_1\cdots a_r}.
\end{equation}
The most common way for $|\sL^*(\ba)|$ to be small is when there are many of the $a_i$ which are small.
To capture this, let $\tilde{a}_1, \tilde{a}_2,\ldots$ be the increasing rearrangement of the sequence
$\ba$, so that $\tilde{a}_1 \le \tilde{a}_2 \le \cdots$.  For any $j$ satisfying $0\le j\le r$, we have
\[
 \sL^*(\ba) \subset \left\{ m + \sum_{i\in I} \tilde{a}_i : 0\le m\le \sum_{i=1}^j \tilde{a}_i, \, I \subset \{j+1, \ldots, r\} \right\},
\]
from which it follows immediately that
\[
|\sL^*(\ba)| \le G(\ba),
\]
where
\begin{equation}\label{LG}
G(\ba) = \min_{0\le j\le r} 2^{r-j} \( \tilde{a}_1 + \cdots + \tilde{a}_j + 1 \).
\end{equation}
It is reasonable to expect that
\begin{equation}\label{up-approx}
  \sum_{a_1,\ldots,a_r=1}^k \frac{G(\ba)}{a_1\cdots a_r} \sim  \int^k_1\!\! \cdots \!\!\int^k_1 \frac{G(\mathbf{t})}{t_1\cdots t_r} d\mathbf{t}
  = (\log k)^r  \int^1_0 \!\!\cdots\!\! \int^1_0 G(e^{\xi_1\log k},\ldots,e^{\xi_r\log k}) d\boldsymbol{\xi},
\end{equation}
where we have enlarged the domain of $G$ to include $r$-tuples of positive real numbers.  However, $G$ is not an especially regular function and so \eqref{up-approx} is perhaps too much to hope for.  The function $G$ is, however, increasing
in every coordinate and we may exploit this to prove an approximate version of \eqref{up-approx}.

\begin{lemma}\label{lemma4.1}
For any $r\ge 1$, we have
\[
\sum_{a_1,\ldots,a_r=1}^k \frac{|\sL^*(\ba)|}{a_1\cdots a_r} \ll (2h_k)^r r! \int_{\Omega_r}
\min_{0\le j\le r} 2^{-j} (k^{\xi_1} + \cdots + k^{\xi_j} + 1 ) d\boldsymbol{\xi},
\]
where $\Omega_r = \{ (\xi_1, \dots, \xi_r) : 0 \leq \xi_1 \leq \xi_2 \leq \dots \leq \xi_r \leq 1\}$.
\end{lemma}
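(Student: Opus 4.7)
The plan is to begin by invoking the pointwise bound $|\sL^*(\ba)| \le G(\ba)$ already displayed in \eqref{LG}, and then pass from the discrete sum $\sum_{a_1,\ldots,a_r=1}^k G(\ba)/(a_1\cdots a_r)$ to the continuous integral appearing on the right-hand side. The essential observation enabling the passage to the continuous world is that $G$ is \emph{coordinate-wise nondecreasing}: if one increases a single $a_i$, then for every $j$ the $j$-th smallest coordinate of the multiset $\{a_1,\ldots,a_r\}$ is a nondecreasing function of $a_i$, so each partial sum $\tilde a_1 + \cdots + \tilde a_j$ is nondecreasing, and hence so is each $2^{r-j}(\tilde a_1+\cdots+\tilde a_j+1)$ and their minimum.

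The next step is a box-wise sum-to-integral comparison. For each integer tuple $\ba \in \{1,\ldots,k\}^r$, take the unit cube $\prod_{i=1}^r [a_i,a_i+1]$; on this cube monotonicity gives $G(\mathbf{t}) \ge G(\ba)$, and $t_i \le 2a_i$ yields $1/a_i \le 2/t_i$, so that $G(\ba)/(a_1\cdots a_r) \le 2^r G(\mathbf{t})/(t_1\cdots t_r)$ pointwise in the cube. Integrating over the cube (of unit volume) and summing over $\ba$ (whose cubes tile $[1,k+1]^r$) gives
\[
\sum_{a_1,\ldots,a_r=1}^k \frac{G(\ba)}{a_1\cdots a_r} \le 2^r \int_{[1,k+1]^r} \frac{G(\mathbf{t})}{t_1 \cdots t_r} d\mathbf{t}.
\]
After substituting $t_i = k^{\xi_i}$ (so $dt_i/t_i = \log k \cdot d\xi_i$), exploiting the symmetry of $G$ to restrict the $\xi$-integral to the sorted region (picking up a factor $r!$), observing that for sorted $\xi$ we have $G(k^{\xi_1},\ldots,k^{\xi_r}) = 2^r\min_j 2^{-j}(k^{\xi_1}+\cdots+k^{\xi_j}+1)$, and handling the small over-extension of the integration domain beyond $\Omega_r$ (the interval $[0,\log(k+1)/\log k]$ exceeds $[0,1]$ only by $O(1/\log k)$) by an absolute constant, one arrives at a bound of the claimed shape with $\log k$ in place of $h_k$, and the replacement is legitimate since $\log k \le h_k$.

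The main obstacle I anticipate is achieving the precise prefactor $(2h_k)^r$ rather than a larger $(Ch_k)^r$ with $C>2$: the route sketched above produces an extra factor of $2^r$, because the $2^r$ from the pointwise monotone bound $1/a\le 2/t$ combines with the $2^r$ that emerges when rewriting $G(k^\xi) = 2^r\min_j 2^{-j}(\cdots)$. I expect this factor can be saved by replacing the pointwise cube comparison with a dyadic decomposition: partition $\{1,\ldots,k\}$ into blocks $I_\ell = [2^{\ell-1},2^\ell)$ and use the sharp estimate $\sum_{a\in I_\ell}1/a = \log 2 + O(2^{-\ell})$. Since there are $L \asymp \log_2 k$ dyadic levels, the identity $\sum_\ell c_\ell = h_k$ implies $L\cdot\log 2 \le h_k$, and each $r$-tuple of levels contributes $\prod_i c_{\ell_i} \asymp (\log 2)^r$ on average; combined with the monotonicity and scaling property $G(2\ba)\le 2G(\ba)$, this trades the wasteful $2^r$ from the pointwise comparison for an averaged estimate, folding the factor $(\log 2\cdot L)^r$ tightly into $h_k^r$ and leaving only the single $2^r$ from $G(k^\xi) = 2^r\min_j 2^{-j}(\cdots)$, which is precisely the $2^r$ appearing in $(2h_k)^r$.
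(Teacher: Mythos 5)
You have the right strategy at a high level---bound $|\sL^*(\ba)|$ by $G(\ba)$, use coordinate-wise monotonicity of $G$ to pass to an integral, change variables and exploit symmetry---but your first route genuinely loses a factor of $2^r$, which is fatal: the statement is $\ll (2h_k)^r r!(\cdots)$, and the implied constant in $\ll$ is absolute, so $(4h_k)^r$ is \emph{not} $\ll (2h_k)^r$ for $r\to\infty$. You correctly diagnose this yourself, but the dyadic "fix" in the final paragraph is not a proof; it is a plausibility sketch with no actual estimate. Two specific difficulties it glosses over: (i) within a dyadic block the cubes-to-integral comparison still faces the mismatch $\sum_{a\in I_\ell}1/a \ne \int_{I_\ell} dt/t$, and for small $\ell$ the ratio $\lambda_\ell/\log 2$ is strictly greater than $1$ (e.g.\ $\lambda_1/\log 2 = 1/\log 2 \approx 1.44$), so the loss does not simply "average out'' to $1$ without a careful accounting; and (ii) the phrase ``each $r$-tuple of levels contributes $\prod_i c_{\ell_i}\asymp(\log 2)^r$ on average'' is not a statement one can integrate against $G$ without an argument, since $G$ is strongly correlated with which levels appear.

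The paper avoids the issue entirely by a sharper covering. Instead of the unit cube $\prod[a_i,a_i+1]$, it uses the box
\[
R(\ba)=\prod_{i=1}^r\bigl[\exp(h_{a_i-1}),\,\exp(h_{a_i})\bigr].
\]
This choice has two simultaneous virtues. First, $\int_{R(\ba)}\frac{d\mathbf{t}}{t_1\cdots t_r}=\prod_i\frac{1}{a_i}$ \emph{exactly} (since $h_{a}-h_{a-1}=1/a$), so there is no factor of $2$ per coordinate to manufacture. Second, by \eqref{hk} we have $\exp(h_{a-1})\ge a$, so the left endpoint of each interval is already $\ge a_i$, hence the non-decreasing rearrangements satisfy $\tilde t_i\ge\tilde a_i$ for every $\mathbf{t}\in R(\ba)$, and monotonicity gives $G(\ba)\le G(\mathbf{t})$ with no loss. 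The rest of the argument---$\sum_{\ba}G(\ba)/\prod a_i\le\int_{[1,\exp(h_k)]^r}G(\mathbf{t})\,d\mathbf{t}/\prod t_i$, the substitution $t_i=e^{\xi_i h_k}$ producing $(h_k)^r$, symmetry producing $r!\int_{\Omega_r}$, and the identity $G(e^{\xi_1 h_k},\ldots)=2^r\min_j 2^{-j}(\cdots)$ on the sorted region---then delivers $(2h_k)^r$ cleanly, with only a single final use of $h_k\le 1+\log k$ to replace $e^{\xi_j h_k}$ by $e\cdot k^{\xi_j}$, the $e$ being absorbed into the implied constant. If you want to salvage your approach, replace the unit cube with $R(\ba)$ and the $2^r$ disappears at the source rather than needing to be clawed back afterwards.
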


\begin{proof}
Motivated by the fact that $1/a = \int_{\exp(h_{a-1})}^{\exp(h_a)} dt/t$, define the product sets
\[
 R(\ba) = \prod_{i=1}^r \left[ \exp\( h_{a_i-1} \), \exp\( h_{a_i} \) \right].
\]
By \eqref{LG}, we have
\[
  \sum_{a_1,\ldots,a_r=1}^k \frac{|\sL^*(\ba)|}{a_1\cdots a_r} \le \sum_{a_1,\ldots,a_r=1}^k \frac{G(\ba)}{a_1\cdots a_r}
  =  \sum_{a_1,\ldots,a_r=1}^k G(\ba) \int_{R(\ba)} \frac{d \mathbf{t}}{t_1\cdots t_r}.
\]
Consider some $\mathbf{t} \in R(\ba)$.  Writing $\tilde t_1 \leq \tilde t_2 \leq \dots \leq \tilde t_r$ for the non-decreasing rearrangement of $\mathbf{t}$, we have
\[
  \exp\( h_{\tilde{a}_i-1} \) \le \tilde{t_i} \le  \exp\( h_{\tilde{a}_i} \) \quad \mbox{for $1\le i\le r$}. 
\]
From \eqref{hk} we see that $\tilde{t}_i \ge \tilde{a}_i$ for all $i$. Hence
\[
 G(\ba) \le \min_{0\le j\le r} 2^{r-j} (\tilde{t_1} + \cdots + \tilde{t_j}+1) = G(\mathbf{t}) \quad \mbox{for all $\mathbf{t}\in R(\ba)$}.
\]
This yields
 \[
 \sum_{a_1,\ldots,a_r=1}^k G(\ba) \int_{R(\ba)} \frac{d \mathbf{t}}{t_1\cdots t_r} \le
  \sum_{a_1,\ldots,a_r=1}^k \int_{R(\ba)} \frac{G(\mathbf{t})}{t_1\cdots t_r} d \mathbf{t} = \int_1^{\exp(h_k)}\cdots \int_1^{\exp(h_k)}
  \frac{G(\mathbf{t})}{t_1\cdots t_r} d \mathbf{t}.
 \]
The integrand on the right is symmetric in $t_1,\ldots,t_r$.  Making the change of variables $t_i=e^{\xi_i h_k}$
yields
\[
  \sum_{a_1,\ldots,a_r=1}^k \frac{|\sL^*(\ba)|}{a_1\cdots a_r} \le (2h_k)^r r! \int_{\Omega_r} 
  \min_{0\le j\le r} 2^{-j}\( e^{\xi_1 h_k} + \cdots + e^{\xi_j h_k} + 1 \) d \boldsymbol{\xi}.
\]
The lemma follows from the upper bound in \eqref{hk}, namely $h_k\le 1+ \log k$.
 \end{proof}

With Lemma \ref{lemma4.1} established, we may conclude the proof of the upper bound in Proposition \ref{average-result} by quoting \cite[Lemma 3.6]{ford-2}. Indeed, in the notation of that paper
\[ \int_{\Omega_r} \min_{0\le j\le r} 2^{-j}\( k^{\xi_1} + \cdots + k^{\xi_j} + 1 \) d \boldsymbol{\xi}  = U_r(\log_2 k),\] and thus by \eqref{upper-start} and  Lemma \ref{lemma4.1} we have
\begin{equation}\label{lkx} \E |\mathscr{L}(\X_k)|  \ll \frac{1}{k} \sum_r (2 h_k)^r U_r(\log_2 k).\end{equation}
Now \cite[Lemma 3.6]{ford-2} provides the bound
\[ U_r(\log_2 k) \ll \frac{1 + |\log_2 k - r|^2}{(r+1)! (2^{r - \log_2 k} + 1)},\] uniformly for $0 \leq r \leq 10 \log_2 k$ .
Set 
\[ r_* = \lfloor \log_2 k\rfloor.\]
In what follows, we will use the observation that $a^n/(n+1)!$ is increasing for $n \leq a - 2$ and decreasing thereafter. 
If $r = r_*+ m$ with $m \leq 9 \log_2 k$, $m \in \Z_{\geq 0}$, then we have
\begin{align*} (2h_k)^r U_r(\log_2 k) & \ll \frac{(\frac{4}{3}h_k)^r}{(r+1)!}  \cdot \pfrac{3}{2}^r \cdot \frac{1 + m^2}{2^m} \\ & \ll \frac{(\frac{4}{3}h_k)^{r_*}}{(r_*+1)!}  \cdot \pfrac{3}{2}^{r_*} \cdot \frac{1 + m^2}{(\frac{4}{3})^m} \\ & \ll k^{1 + \frac{1 + \log\log 2}{\log 2}} (\log k)^{-3/2} \cdot  \frac{1 + m^2}{(\frac{4}{3})^m}.\end{align*}
In the first step we used the observation (and the fact that $\frac{4}{3} < \frac{1}{\log 2}$), and in the second step we used Stirling's formula and \eqref{hk}. 
Summed over $m$, this is of course rapidly convergent and shows that the contribution to \eqref{lkx} from this range of $r$ is acceptable. 

Next suppose that $r = r_* - m$, $m \in \N$. Then we have
\begin{align*}
(2h_k)^r U_r(\log_2 k) & \ll \frac{(\frac{3}{2}h_k)^r}{(r+1)!}  \cdot \pfrac{4}{3}^r \cdot (1 + m^2) \\ & \ll \frac{(\frac{3}{2}h_{k})^{r_*}}{(r_*+1)!} \cdot \pfrac{4}{3}^r \cdot (1 + m^2) \\ & \ll k^{1 + \frac{1 + \log\log 2}{\log 2}} (\log k)^{-3/2} \cdot  \frac{1 + m^2}{(\frac{4}{3})^m}.
\end{align*}
Here, we used the observation (and the fact that $\frac{3}{2} > \frac{1}{\log 2}$) and a second application of Stirling's formula.  
Summed over $m$, this is once again rapidly convergent and the contribution to \eqref{lkx} from this range of $r$ is acceptable. 

There remains the range $r > 10 \log_2 k$. Here, we use the trivial bound
$U_r(\log_2 k) \leq 1/r!$  and thus
\[ \sum_{r > 10 \log_2 k} (2h_k)^r U_r(\log_2 k)  \ll \sum_{r > 10 \log_2 k}  \frac{(2h_k)^r}{r!}  \ll k^{-10},
\] which is obviously minuscule in comparison to the other terms. 

\emph{Remarks.} It is obvious from this analysis and the lower bound in our main theorem that a proportion $\geq 1 - \eps$ 
of all permutations fixing
some set of size $k$ have $\log_2 k + O(\log (1/\eps))$ cycles of length at most $k$. 
It is most probably also true that for a proportion $\geq 1-\eps$ of all permutations fixing some set of size
$k$ we have $\log \tilde a_j \geq j \log 2 - O_{\eps}(1)$ 
for $j\le \log_2 k - O_\eps(1)$, where the $\tilde a_j$ are the (ordered) cycle lengths of the permutation. 
To establish this would require opening up some of the arguments used to bound the quantities $U_k$ in \cite{ford-2}. We plan to return to this and other issues in a future paper.

\emph{Acknolwedgments}. BG is supported by ERC Starting Grant number 279438, \emph{Approximate algebraic structure and applications}, and a Simons Investigator Award. KF is supported by National Science Foundation grants DMS-1201442 and DMS-1501982.

\bibliographystyle{alpha}
\bibliography{fixed-point-permutation}

\begin{thebibliography}{DFG08}

\bibitem[ABT97]{ABT}
R.~Arratia, A.~D. Barbour, and S.~Tavar{\'e}.
\newblock Random combinatorial structures and prime factorizations.
\newblock {\em Notices Amer. Math. Soc.}, 44(8):903--910, 1997.

\bibitem[AT92]{AT92}
R.~Arratia and S.~Tavar{\'e}.
\newblock The cycle structure of random permutations.
\newblock {\em Ann. Probab.}, 20(3):1567--1591, 1992.

\bibitem[Bes34]{Bes}
A.~S. Besicovitch.
\newblock On the density of certain sequences of integers.
\newblock {\em Math. Ann.}, 110:336--341, 1934.

\bibitem[BW]{britnell-wildon}
J.~Britnell and M.~Wildon.
\newblock Computing derangement probabilities of the symmetric group acting on
  $k$-sets.
\newblock \url{http://arxiv.org/abs/1511.04106}.

\bibitem[DFG08]{dfg08}
P.~Diaconis, J.~Fulman, and R.~Guralnick.
\newblock On fixed points of permutations.
\newblock {\em J. Algebraic Combin.}, 28(1):189--218, 2008.

\bibitem[For08a]{ford}
K.~Ford.
\newblock The distribution of integers with a divisor in a given interval.
\newblock {\em Ann. of Math. (2)}, 168(2):367--433, 2008.

\bibitem[For08b]{ford-2}
K.~Ford.
\newblock Integers with a divisor in {$(y,2y]$}.
\newblock In {\em Anatomy of integers}, volume~46 of {\em CRM Proc. Lecture
  Notes}, pages 65--80. Amer. Math. Soc., Providence, RI, 2008.

\bibitem[Gra06]{granville}
A.~Granville.
\newblock Cycle lengths in a permutation are typically {P}oisson.
\newblock {\em Electron. J. Combin.}, 13(1):Research Paper 107, 23, 2006.

\bibitem[{\L}P93]{lp93}
T.~{\L}uczak and L.~Pyber.
\newblock On random generation of the symmetric group.
\newblock {\em Combin. Probab. Comput.}, 2(4):505--512, 1993.

\bibitem[PPR]{PPR}
R.~Pemantle, Y.~Peres, and I.~Rivin.
\newblock Four random permutations conjugated by an adversary generate ${S_n}$
  with high probability.
\newblock \url{http://arxiv.org/abs/1412.3781}.

\bibitem[Sou13]{sound-ams}
K.~Soundararajan.
\newblock \emph{in} {S}rinivasa {R}amanujan, {G}oing {S}trong at 125, {P}art
  {I}{I}.
\newblock {\em Notices Amer. Math. Soc.}, 60(1):10--22, 2013.

\end{thebibliography}


\begin{thebibliography}{DFG08}

\bibitem[BW]{britnell-wildon}
J.~Britnell and M.~Wildon.
\newblock Computing derangement probabilities of the symmetric group acting on
  $k$-sets.
\newblock \url{http://arxiv.org/abs/1511.04106}.

\bibitem[DFG08]{dfg08}
P.~Diaconis, J.~Fulman, and R.~Guralnick.
\newblock On fixed points of permutations.
\newblock {\em J. Algebraic Combin.}, 28(1):189--218, 2008.

\bibitem[For08a]{ford}
K.~Ford.
\newblock The distribution of integers with a divisor in a given interval.
\newblock {\em Ann. of Math. (2)}, 168(2):367--433, 2008.

\bibitem[For08b]{ford-2}
K.~Ford.
\newblock Integers with a divisor in {$(y,2y]$}.
\newblock In {\em Anatomy of integers}, volume~46 of {\em CRM Proc. Lecture
  Notes}, pages 65--80. Amer. Math. Soc., Providence, RI, 2008.

\bibitem[{\L}P93]{lp93}
T.~{\L}uczak and L.~Pyber.
\newblock On random generation of the symmetric group.
\newblock {\em Combin. Probab. Comput.}, 2(4):505--512, 1993.

\end{thebibliography}

\end{document}